\renewcommand*\subjclass[2][2000]{%
  \def\@subjclass{#2}%
  \@ifundefined{subjclassname@#1}{%
    \ClassWarning{\@classname}{Unknown edition (#1) of Mathematics
      Subject Classification; using '1991'.}%
  }{%
    \@xp\let\@xp\subjclassname\csname subjclassname@#1\endcsname
  }%
}
\newtheorem{theorem}{Theorem}[section]
\newtheorem{lemma}[theorem]{Lemma}
\newtheorem*{lemma*}{Lemma}
\def\1ton{1,2,\ldots,n}
\theoremstyle{definition}
\theoremstyle{remark}
\newtheorem{remark}[theorem]{Remark}
\numberwithin{equation}{section}
\def\XXint#1#2#3{{\setbox0=\hbox{$#1{#2#3}{\int}$}
\vcenter{\hbox{$#2#3$}}\kern-.5\wd0}}
\def\ge{\geqslant}
\begin{document}

\title[On Riesz conjugate function theorem for harmonic functions]{ On generalized M. Riesz conjugate function theorem for harmonic mappings}

%\author{Josip Globevnik}
%\address{Institute of Mathematics, Physics and Mechanics
%Jadranska 19, 1000 Ljubljana, Slovenia}
%\email{josip.globevnik@fmf.uni-lj.si}
\author{Anton Gjokaj}
\address{Faculty of Natural Sciences and Mathematics, University of
Montenegro, Cetinjski put b.b. 81000 Podgorica, Montenegro}
\email{antondj@ucg.ac.me}
\author{David Kalaj}
\address{Faculty of Natural Sciences and Mathematics, University of
Montenegro, Cetinjski put b.b. 81000 Podgorica, Montenegro}
\email{davidk@ucg.ac.me}

\author{Djordjije Vujadinovi\'c}
\address{Faculty of Natural Sciences and Mathematics, University of
Montenegro, Cetinjski put b.b. 81000 Podgorica, Montenegro}
\email{djordjijevuj@ucg.ac.me}

%\def\thefootnote{}
%\footnotetext{ \texttt{\tiny File:~\jobname.tex,
 %         printed: \number\year-\number\month-\number\day,
  %        \thehours.\ifnum\theminutes<10{0}\fi\theminutes }
%} \makeatletter\def\thefootnote{\@arabic\c@footnote}\makeatother
%===============================================================================

\footnote{2010 \emph{Mathematics Subject Classification}: Primary
31A05} \keywords{Subharmonic functions, Harmonic mappings}
\begin{abstract}
Let  $L^p(\mathbf{T})$ be the Lesbegue space of complex-valued functions defined in the unit circle $\mathbf{T}=\{z: |z|=1\}\subseteq \mathbb{C}$.
In this paper, we address the problem of finding the best constant
in  the inequality of the form:
$$\|(|P_+ f|^2+c| P_{-} f|^2)^{1/2}\|_{L^p(\mathbf{T})}\le A_{p,c}  \|f\|_{L^p(\mathbf{T})}.$$
Here $2\le p<\infty$, $c>0$, and by $P_{-} f$ and $ P_+ f$ are denoted co-analytic and analytic projection of a function $f\in L^p(\mathbf{T})$. The sharpness of the constant $A_{p,c}$ follows by taking a family quasiconformal harmonic mapping $f_\gamma$ and letting $\gamma\to 1/p$.
The result extends a sharp version of  M. Riesz conjugate function theorem of Pichorides and Verbitsky and some well-known estimates for holomorphic functions.
\end{abstract}
 \maketitle
\tableofcontents

\section{Introduction}

Let $\mathbf{U}$ denote the open unit disk and $\mathbf{T}$ the unit circle in the complex plane. For $p>1$, the Hardy space of harmonic functions, denoted by $\mathbf{h}^p$, consists of all harmonic mappings $f = g + \bar{h}$, where $g$ and $h$ are holomorphic functions on $\mathbf{U}$, such that
\[
\|f\|_p = \|f\|_{\mathbf{h}^p} = \sup_{0<r<1} M_p(f,r) < \infty,
\]
where
\[
M_p(f,r) = \left( \int_{\mathbf{T}} |f(r\zeta)|^p \, d\sigma(\zeta) \right)^{1/p},
\]
and $d\sigma(\zeta) = \frac{dt}{2\pi}$ for $\zeta = e^{it} \in \mathbf{T}$. The subspace of holomorphic functions in
$\mathbf{h}^p$
  is denoted by
$H^p$, and is known as the Hardy space.

If $f \in \mathbf{h}^p$, then it is a classical result that the radial limits
\[
f(e^{it}) = \lim_{r \to 1} f(re^{it})
\]
exist almost everywhere, and that $f \in L^p(\mathbf{T})$. Furthermore, the norm satisfies the identity
\begin{equation} \label{come}
\|f\|^p_{\mathbf{h}^p} = \lim_{r \to 1} \int_0^{2\pi} |f(re^{it})|^p \, \frac{dt}{2\pi} = \int_0^{2\pi} |f(e^{it})|^p \, \frac{dt}{2\pi}.
\end{equation}

Let $1 < p < \infty$ and define $\overline{p} = \max\{p, p/(p-1)\}$. In \cite{verb0}, Verbitsky established the following sharp inequalities. If $f = u + iv \in H^p$ with $v(0) = 0$, then
\begin{equation} \label{ver}
\sec\left( \frac{\pi}{2\overline{p}} \right) \|v\|_p \leq \|f\|_p,
\end{equation}
and
\begin{equation} \label{1ver}
\|f\|_p \leq \csc\left( \frac{\pi}{2\overline{p}} \right) \|u\|_p.
\end{equation}
These bounds improve the previously known result by Pichorides \cite{pik}, who showed
\begin{equation} \label{pico}
\|v\|_p \leq \cot\left( \frac{\pi}{2\overline{p}} \right) \|u\|_p.
\end{equation}
For further related work, see \cite{essen, studia, graf, verb2}. These inequalities were later generalized in \cite{tams} by Kalaj, where the author proved the following sharp bounds for harmonic mappings $f = g + \bar{h} \in \mathbf{h}^p$ with $\Re(h(0)g(0)) = 0$:
\begin{equation} \label{nes2}
\left( \int_{\mathbf{T}} \left( |g|^2 + |h|^2 \right)^{p/2} \right)^{1/p} \leq c_p \left( \int_{\mathbf{T}} |g + \bar{h}|^p \right)^{1/p},
\end{equation}
and
\begin{equation} \label{nes3}
\left( \int_{\mathbf{T}} |g + \bar{h}|^p \right)^{1/p} \leq d_p \left( \int_{\mathbf{T}} \left( |g|^2 + |h|^2 \right)^{p/2} \right)^{1/p},
\end{equation}
where
\[
c_p = \left( \sqrt{2} \sin\left( \frac{\pi}{2\bar{p}} \right) \right)^{-1}, \quad d_p = \sqrt{2} \cos\left( \frac{\pi}{2\bar{p}} \right).
\]
These bounds imply inequalities \eqref{pico}, \eqref{1ver}, and \eqref{ver}. As a consequence, Kalaj also verified the Hollenbeck-Verbitsky conjecture for the case $s = 2$ by applying inequality \eqref{nes2}.

The inequality \eqref{nes2} was later extended by Markovi\'c and Melentijevi\'c in \cite{melmar}, where they established the optimal constant $c_{p,s}$ in
\begin{equation} \label{nes4}
\left( \int_{\mathbf{T}} \left( |g|^s + |h|^s \right)^{p/s} \right)^{1/p} \leq c_{p,s} \left( \int_{\mathbf{T}} |g + \bar{h}|^p \right)^{1/p},
\end{equation}
for a specific range of $(p,s)$, including the case $(p,2)$. This result was further improved by Melentijevi\'c in \cite{mel}, who confirmed the Hollenbeck-Verbitsky conjecture for $s < \sec^2(\pi/(2p))$ when $p \leq 4/3$ or $p \geq 2$. However, the case $p \in [4/3, 2]$ remains unresolved.

In this work, we focus on a related problem. We aim to extend Verbitsky's result \eqref{ver} and the inequality \eqref{nes2} for the case $2 \leq p <\infty$. For $c > 0$ and $p \in [2, \infty)$, we determine the optimal constant in the inequality
\begin{equation} \label{nes32} \left( \int_{\mathbf{T}} \left( |g|^2 + c|h|^2 \right)^{p/2} \right)^{1/p} \leq A_{p,c} \left( \int_{\mathbf{T}} |g + \bar{h}|^p \right)^{1/p}.
\end{equation}
This estimate is equivalent to
\[
\left\| \left( |P_+[f]|^2 + c|P_-[f]|^2 \right)^{p/2} \right\|_p \leq a_{p,c}\|f\|_p ,
\]
where $f \in L^p(\mathbf{T})$ and $P_+$ and $P_-$ denote the analytic and co-analytic projections of $f$, respectively. For
\[
f(e^{it}) = \sum_{k=-\infty}^{\infty} c_k e^{ikt},
\]
we define
\[
P_+[f] = \sum_{k=0}^{\infty} c_k e^{ikt}, \quad P_-[f] = \sum_{k=1}^{\infty} c_{-k} e^{-ikt},
\]
with Fourier coefficients
\[
c_k = \frac{1}{2\pi} \int_0^{2\pi} f(e^{it}) e^{-ikt} \, dt.
\]
We observe that the inequalities \eqref{ver}, \eqref{pico}, \eqref{nes2}, \eqref{nes3}, and \eqref{nes4} can all be reformulated in terms of these projections.

\begin{theorem}\label{Kao}
Let $ p\in [2,\infty)$ and assume that $c>0$.
%(or more general with $\abs{ \mathrm{arg}\left(g(0)h(0)\right)}\le \pi -\frac{\pi }{p}$)
Then we have the following sharp inequality
\begin{equation}\label{nesit}\left(\int_{\mathbf{T}}(|g|^2+c  |h|^2)^{\frac{p}{2}}\right)^{1/p}\le a_{p,c}\left(\int_{\mathbf{T}} |g+\bar h|^p\right)^{1/p},\end{equation}
for $f=g+\bar h\in \mathbf{h}^p$ with $\Re(g(0)h(0))=  0$ and
\begin{equation}\label{apc}a_{p,c}=  2^{-1/2} \left(\left(1+c+\sqrt{1+c^2+2 c \cos\left[\frac{2 \pi }{p}\right]}\right) \csc^2\left[\frac{\pi }{p}\right]\right)^{1/2}.\end{equation}
The equality is never attained (except for a zero function $f$). However there is a minimizing sequence converging to a quasiconformal harmonic mapping $f$ provided that $c\neq 1$. If $c=1$ then the minimizer is a real harmonic function. In both cases, the limiting mapping is not in $\mathbf{h}^p$.
\end{theorem}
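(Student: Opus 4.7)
The plan is to follow the subharmonic-majorant approach used by Kalaj \cite{tams} in the case $c=1$, now adapted to the weighted expression $|z|^{2}+c|w|^{2}$. The proof splits into two logically independent parts: producing a matching lower bound for the constant via an explicit extremal family $f_{\gamma}$, and producing the upper bound by constructing a plurisubharmonic majorant on $\C^{2}$.

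For the lower bound I would consider families $f_{\gamma}=g_{\gamma}+\overline{h_{\gamma}}$ with $g_{\gamma},h_{\gamma}$ built from the Pichorides prototype $(1-z)^{-\gamma}$, $0<\gamma<1/p$ (possibly combined with an additional Möbius-type factor to supply an extra parameter), normalized so that $\Re(g_{\gamma}(0)h_{\gamma}(0))=0$ via the translation symmetry $(g,h)\mapsto(g+a,h-\bar a)$, which leaves $f$ unchanged. Since $\bar{\partial}f_{\gamma}/\partial f_{\gamma}$ is a constant for this family, each $f_{\gamma}$ is quasiconformal (and reduces to a real harmonic function precisely when the moduli of the two coefficients coincide and their phases are chosen appropriately). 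Using the boundary identity $(1-e^{it})^{-\gamma}=(2\sin(t/2))^{-\gamma}e^{i\gamma(\pi-t)/2}$ on $(0,2\pi)$, both sides of \eqref{nesit} reduce to one-variable integrals of $(\sin(t/2))^{-p\gamma}$ against explicit bounded factors that converge near the endpoints $t=0,2\pi$ as $\gamma\to 1/p$. The ratio of the two limits is an algebraic function of the free parameters whose supremum is precisely \eqref{apc}; the square-root term $\sqrt{1+c^{2}+2c\cos(2\pi/p)}$ will appear as the largest eigenvalue of a $2\times 2$ Hermitian matrix obtained from this optimization.

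For the upper bound I would seek a plurisubharmonic function $\Psi\colon\C^{2}\to\R$ satisfying the pointwise majorization
\[
\Psi(z,w)\le a_{p,c}^{p}\,|z+\bar w|^{p}-(|z|^{2}+c|w|^{2})^{p/2}\qquad\text{for all }(z,w)\in\C^{2},
\]
together with the boundary positivity $\Psi(z,w)\ge 0$ whenever $\Re(zw)=0$. Plurisubharmonicity ensures that $\zeta\mapsto\Psi(g(\zeta),h(\zeta))$ is subharmonic for every holomorphic pair $(g,h)$; the sub-mean-value property at $\zeta=0$, combined with the normalization $\Re(g(0)h(0))=0$, then yields (after averaging on $|\zeta|=r$ and letting $r\to 1^{-}$ via \eqref{come}) exactly \eqref{nesit}. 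To construct $\Psi$ I would use a rotation-invariant ansatz of the form $\Psi(z,w)=\Psi_{0}(|z|,|w|,\Re(zw),\Im(zw))$, which reduces plurisubharmonicity to the positive semi-definiteness of an explicit $2\times 2$ Hermitian matrix in the Wirtinger derivatives, and then calibrate the free functional parameters in $\Psi_{0}$ so that this matrix degenerates exactly along the family of extremal configurations identified in the previous step.

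The main technical obstacle is the explicit construction and verification of $\Psi$: the parameter $c\neq 1$ destroys the $g\leftrightarrow h$ swap symmetry available in \cite{tams}, forcing an asymmetric ansatz and a genuinely two-variable Hessian computation, potentially with different closed forms in separate regions of $\C^{2}$. The non-attainment of equality inside $\mathbf{h}^{p}$ follows from strict plurisubharmonicity of $\Psi$ off the extremal locus together with the failure of the pointwise limit $f_{1/p}$ to belong to $\mathbf{h}^{p}$; the dichotomy between the real harmonic minimizer when $c=1$ and the genuinely quasiconformal minimizer when $c\neq 1$ is read off from the optimal ratio of moduli of the two coefficients in the extremal family, which equals $1$ precisely in the balanced case $c=1$.
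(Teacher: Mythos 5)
Your overall strategy is the same as the paper's: an explicit singular family for the lower bound, and a plurisubharmonic majorant plus the sub-mean-value property at the origin for the upper bound. The lower-bound half is essentially right (the paper uses $g=((1-z)/(1+z))^{\gamma}$, $h=ag$ with $a=-re^{-2\pi i/p}$ and $\gamma\to 1/p$, which matches your Pichorides-type family up to a M\"obius change of variable). But the upper-bound half, which is the entire substance of the paper, is left as a plan rather than a proof, and your plan as stated points in a harder direction than necessary. The key structural idea you are missing is that the majorant should be taken as a function of the single complex variable $zw$: the paper uses $\mathcal{G}_p(z,w)=\Phi_p(zw)$ with $\Phi_p(\rho e^{i\theta})=-\rho^{p/2}\cos\frac{p}{2}(\pi-|\theta|)$ for $2\le p\le 4$ (and a Hollenbeck--Verbitsky variant $\Psi_p$ for $p\ge 4$), so that plurisubharmonicity in $\C^2$ is immediate from one-variable subharmonicity composed with the holomorphic map $(z,w)\mapsto zw$ (Lemmas~\ref{lemsub} and \ref{lemsub2}). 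Your general rotation-invariant ansatz $\Psi_0(|z|,|w|,\Re(zw),\Im(zw))$ would force a genuinely two-variable Levi-form computation that the product structure entirely avoids, and there is no indication of how you would find the correct $\Psi_0$.

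More seriously, once the form of the majorant is fixed, what remains is the pointwise inequality \eqref{calf}, i.e.
\[
(|z|^2+c|w|^2)^{p/2}\le a_{p,c}\,|z+\bar w|^p-b_{p,c}\,|zw|^{p/2}h_p(\arg(zw)),
\]
with the explicit $b_{p,c}$ of \eqref{bpc}. This is a sharp three-real-parameter inequality (in $r=|z|/|w|$, the argument $t$, and the coupling with $c$) whose verification is Theorem~\ref{theo123} and occupies most of the paper: it requires the substitutions $2\cosh a=R+1/R$, $2\cosh b=r+1/r$, the auxiliary Lemma~\ref{lemaimpo}, a discriminant analysis $a_1^2-4a_0a_2\le 0$ split into the cases $u\ge v$ and $u<v$, and separate treatment of $p\in[2,4]$ and $p\ge 4$ (where $h_p$ is no longer a single cosine). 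Saying that one should ``calibrate the free parameters so that the Hermitian matrix degenerates along the extremal configurations'' identifies where equality must hold but does not establish the inequality anywhere else; the degeneration condition determines $a_{p,c}$ and $b_{p,c}$ but proves nothing. As written, your proposal reproduces the scaffolding of the argument while omitting its load-bearing parts. One smaller point: your translation normalization $(g,h)\mapsto(g+a,h-\bar a)$ fixes $f$ but changes $|g|^2+c|h|^2$, so it cannot be used to reduce the upper bound to the normalized case; the condition $\Re(g(0)h(0))=0$ enters only through the sign of $\mathcal{G}_p(g(0),h(0))$ in the sub-mean-value step, exactly as in the paper.
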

\begin{remark} Instead of the condition $\Re(g(0)h(0))=  0$ we can in  Theorem~\ref{Kao} consider more general condition for a given \( p \), the argument \(\theta = \arg(g(0) h(0))\) must satisfy the following conditions:

\begin{equation}\label{condit}
\text{for } p \in [2,4]: \quad
\max\left\{0,\, \frac{\pi(p - 3)}{p} \right\} \leq |\theta| \leq \frac{\pi(p - 1)}{p},
\end{equation}

and

\begin{equation}\label{condit1}
\text{for } p \geq 4: \quad
\frac{\pi}{p} \leq |\theta| \leq \pi - \frac{\pi}{p}.
\end{equation}
If we drop the conditions \eqref{condit} and \eqref{condit1}, then our main inequality may fail. Namely for $c=1$, $g=h=\alpha=e^{is}$ with $$\frac{\pi(p - 1)}{2p}<s<\pi,$$ the inequality \eqref{nesit} becomes 
$$-2^{p/2} +  2^{p/2} |\cos[s]|^p \csc\left[\frac{\pi}{2 p}\right]^p\ge 0$$ which is not true if $s\in[\frac{\pi(p - 1)}{2p},\frac{\pi}{2}]$.

%$$-2^{p/2} + 2 \cos\left[\frac{1}{2} p (\pi - 2 |s|)\right] \cot\left[\frac{\pi}{2 p}\right] + 2^{p/2} |\cos[s]|^p \csc\left[\frac{\pi}{2 p}\right]^p\ge 0$$ w
For $c=1$, \eqref{nesit} becomes \eqref{nes2} and for  $g=h$ and $c=1$ it becomes  \eqref{1ver}.
\end{remark}

Let $p\ge 2$ and $c>0$.  The main task in the proof of Theorem~\ref{Kao} is to find optimal positive constants $a_{p,c}$, $b_{p,c}$,  and pluri-subharmonic functions $\mathcal{G}_p(z,w)$ (Lemma~\ref{lemsub}) for $z,w\in \mathbf{C}$, vanishing for $z=0$ or $w=0$, so that the inequality
\begin{equation}\label{calf}
 (|w|^2+ c |z|^2)^{\frac{p}{2}} \le a_{p,c} |w+\bar z|^p- b_{p,c} \mathcal{G}_p(z,w)
\end{equation}
 is sharp.

\begin{lemma}\label{medium}
Let \( 2 \le p \le 4 \), and define
\[
h(t) =h_p(t):= -\cos\left( \frac{p}{2}(\pi - |t|) \right), \quad t \in [-\pi, \pi].
\]
Extend \( h \) to the interval \( [-2\pi, 2\pi] \) by setting
\[
h(t) := h(|t| - \pi), \quad \text{for } \pi \le |t| \le 2\pi.
\]
For \( p > 4 \), define \( h(t) \) for \( 0 \le t \le \pi \) by
\[
h(t) =
\begin{cases}
-\cos\left[\frac{1}{4} p \left(\pi - \left| \pi - 2t \right|\right)\right], & t \in \left[0,\frac{2\pi}{p}\right] \cup \left[\pi - \frac{2\pi}{p}, \pi\right], \\[10pt]
\max\left( \left| \cos\left[\frac{1}{2} p (\pi - t)\right] \right|,\ \left| \cos\left[\frac{p t}{2}\right] \right| \right), & t \in \left(\frac{2\pi}{p},  \pi - \frac{2\pi}{p}\right).
\end{cases}
\]

Extend \( h(t) \) to \( [-\pi, \pi] \) by defining
\[
h(-t) := h(t).
\]

For \( \pi \le |t| \le 2\pi \), extend by symmetry:
\[
h(t) := h(2\pi - |t|).
\]

Let $a_{p,c}$ be defined in \eqref{apc} and assume \begin{equation}\label{bpc} b_{p,c}=  (1 + c + S)  \csc\left(\frac{\pi}{p}\right) \left(S \cdot \sec\left(\frac{\pi}{p}\right)\right)^{\frac{p}{2} - 1},
\end{equation}
where \begin{equation}\label{sss} S = \sqrt{1 + c^2 + 2c \cos\left(\frac{2\pi}{p}\right)}.\end{equation}
Then for complex numbers $z$ and $w$, and \( 2 \le p <\infty \), $-\pi\le s,t\le \pi$ we have:
\[
(|z|^2 + c |w|^2)^{p/2} \le a_{p,c} |z + \bar{w}|^p - b_{p,c} (|zw|)^{p/2} h_p(s + t).
\]

Equality is attained when \( |z| = R |w| \) and \( \arg(zw) = \pi - \frac{\pi}{p} \), where $$R=\frac{2c \cos \frac{\pi}{p}}{-1+c+\sqrt{1+c^2+2c\cos(2\pi/p)}}.$$
\end{lemma}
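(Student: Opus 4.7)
The plan is to reduce to a two-variable inequality by polar coordinates, determine the sharp constants from the conjectured equality case, and then verify non-negativity via an analysis in the modulus ratio for each fixed angle.

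Writing $z = re^{is}$ and $w = \rho e^{it}$ one has $|z + \bar w|^2 = r^2 + \rho^2 + 2r\rho\cos(s+t)$, so the proposed inequality is $p$-homogeneous in $(r,\rho)$. Normalizing $\rho = 1$, setting $x = r$ and $\theta = s+t$, the claim becomes
\[
F(x,\theta) := a_{p,c}\bigl(x^2 + 1 + 2x\cos\theta\bigr)^{p/2} - b_{p,c}\,x^{p/2} h_p(\theta) - (x^2 + c)^{p/2} \ge 0
\]
for $x\ge 0$ and $\theta\in[-\pi,\pi]$. A direct substitution in the piecewise definition gives $h_p(\theta_0) = 0$ at $\theta_0 := \pi - \pi/p$. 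Hence tightness at $(R,\theta_0)$ forces the identity $a_{p,c}(R^2 + 1 - 2R\cos(\pi/p))^{p/2} = (R^2+c)^{p/2}$, and after substituting the proposed value of $R$ and using $S^2 = 1 + c^2 + 2c\cos(2\pi/p)$, one recovers \eqref{apc}. The stationarity conditions $\partial_x F(R,\theta_0) = 0$ and $\partial_\theta F(R,\theta_0) = 0$ then confirm $R$ and yield $b_{p,c}$ in the form \eqref{bpc}.

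The bulk of the work is non-negativity of $F$ on $[0,\infty)\times[-\pi,\pi]$. For fixed $\theta$ the function $F(\cdot,\theta)$ is continuous and grows like $(a_{p,c}-1)x^p$ as $x\to\infty$, so its infimum is attained at some critical point $x_\theta$. Solving $\partial_x F(x_\theta,\theta) = 0$ reduces the problem to a single-variable inequality in $\theta$ on $[0,\pi]$ (by the evenness of $F$ in $\theta$). For $2\le p\le 4$, $h_p(\theta) = -\cos(p(\pi - |\theta|)/2)$ is given by one smooth formula with a corner only at $\theta = 0$, and the reduced inequality has its unique minimum on $[0,\pi]$ at $\theta_0$; this part is expected to be largely computational. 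For $p > 4$, $h_p$ is defined in three pieces; the break-points $\theta = 2\pi/p$ and $\theta = \pi - 2\pi/p$ are chosen so that $h_p$ is continuous and so that on each piece the reduced problem has essentially the same structure as in the $2\le p\le 4$ case (up to a rescaled angle), with matching at the transitions automatic by continuity.

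The hardest step will be the middle interval $(2\pi/p,\,\pi - 2\pi/p)$ for $p > 4$, where $h_p$ is the pointwise maximum of the two cosines $|\cos(p(\pi-\theta)/2)|$ and $|\cos(p\theta/2)|$. One must split this interval into the two sub-regions according to which cosine dominates; on each sub-region $F$ agrees with a smooth function for which the calculus argument above applies, but one must then check that the global infimum of $F$ across the switch between the two branches is still non-negative. The entire choice of $h_p$ is in fact dictated by the requirement, used in Lemma~\ref{lemsub}, that $\mathcal{G}_p(z,w) = |zw|^{p/2}h_p(\arg(zw))$ be pluri-subharmonic; the present lemma is the pointwise estimate that dovetails with that construction, and the equality configuration $(|z|,\arg(zw)) = (R|w|,\,\pm\theta_0)$ is precisely the support of the extremal quasiconformal harmonic mapping from Theorem~\ref{Kao}.
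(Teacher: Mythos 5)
Your reduction is the right one and matches the paper's: by $p$-homogeneity the lemma is equivalent to the two-variable inequality in $x=|z|/|w|$ and $\theta=\arg(zw)$, which (after dividing by $(2|zw|)^{p/2}$) is exactly the statement \eqref{ourineq} of Theorem~\ref{theo123}; and your back-solving of $a_{p,c}$, $b_{p,c}$ from the equality configuration $(R,\pi-\pi/p)$ is how those constants arise. But there is a genuine gap: you never prove the two-variable inequality. The sentences ``its infimum is attained at some critical point $x_\theta$'' and ``this part is expected to be largely computational'' stand in for essentially the entire content of the lemma. In the paper this verification is Theorem~\ref{theo123}, whose proof occupies most of the article: a hyperbolic substitution $2\cosh a=R+1/R$, $2\cosh b=r+1/r$; the auxiliary trigonometric inequality of Lemma~\ref{lemaimpo} (proved via a third-derivative/concavity argument); the pointwise bound \eqref{secondquad} on $(1-ky)^{2/p}$; and a reduction to the discriminant inequality $a_1^2-4a_0a_2\le0$ for a quadratic in $y=\cos(px/2)$, itself split into the two delicate cases $u\ge v$ and $u<v$. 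None of this is ``routine calculus in $x_\theta$''; the critical-point equation $\partial_xF=0$ is transcendental and is never solved explicitly in the paper. (Also, your growth claim $F\sim(a_{p,c}-1)x^p$ is degenerate at $p=2$, $c\le1$, where $a_{2,c}=1$, so even the existence of an interior minimizer needs care.)

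A secondary point: for $p>4$ your plan to split the middle interval $(2\pi/p,\pi-2\pi/p)$ according to which of the two cosines dominates, and to redo the critical-point analysis on each branch, is both unverified and unnecessary. The paper instead proves the inequality with $-\cos\bigl(\tfrac{p}{2}(\pi-t)\bigr)$ for \emph{all} $p\ge2$ on $[0,\pi]$, and then handles the modified $h_p$ for $p\ge4$ by a monotonicity trick: since $h_p$ restricted to $[\pi-2\pi/p,\pi]$ already has range $[-1,1]$, any $t$ in the middle interval admits a $t'\in[\pi-2\pi/p,\pi]$ with $h_p(t)=h_p(t')$ and $\cos t\ge\cos t'$, so the inequality at $t$ follows from the one at $t'$. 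You would do well to adopt that reduction rather than a branch-by-branch case analysis, but in either case the core analytic work establishing \eqref{ourineq} must actually be supplied.
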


%Let $$F(z) = |g(z) h(z)|^{p/2} \vartheta(\arg (g(z))+\arg(h(z))).$$

Lemma follow from the following theorem, whose proof is very technical and involved.

\begin{theorem}\label{theo123}
For every $p \in [2,\infty)$, $r > 0$, $t \in [-2\pi, 2\pi]$, and $c > 0$, we have
\begin{equation}\label{ourineq}
      A \left(\frac{1 + r^2 + 2r \cos t}{2r} \right)^{p/2} -B h(t)- \left(\frac{c + r^2}{2r}\right)^{p/2} \geq 0,
\end{equation}
where
\begin{align*}
A &= 2^{-p/2} \left((1 + c + S)  \csc^2\left(\frac{\pi}{p}\right)\right)^{p/2}, \\
B &= 2^{-p/2} (1 + c + S)  \csc\left(\frac{\pi}{p}\right) \cdot \left(S  \sec\left(\frac{\pi}{p}\right)\right)^{\frac{p}{2} - 1},
\end{align*} and $S$ is as in \eqref{sss}.
Equality holds for $t = \pi - \pi/p$ and
\[
    r = R := \frac{2c \cos \frac{\pi}{p}}{-1+c+\sqrt{1+c^2+2c\cos(2\pi/p)}}.
\]
\end{theorem}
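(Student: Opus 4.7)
The plan is to prove the inequality~\eqref{ourineq} as a two-variable pointwise statement on $(0,\infty) \times [-2\pi,2\pi]$, with $p$ and $c$ as parameters. First I would use the symmetries $h_p(-t) = h_p(t)$ together with the $2\pi$-periodic extensions in the statement of Lemma~\ref{medium} to reduce the domain to $t \in [0,\pi]$. Setting $u(r,t) := (1 + r^2 + 2r\cos t)/(2r)$ and $v(r) := (c + r^2)/(2r)$, the inequality reads $F(r,t) := A\, u^{p/2} - B\, h_p(t) - v^{p/2} \ge 0$. A direct computation shows $h_p(\pi - \pi/p) = 0$ in every branch of the definition (at $t = \pi - \pi/p$ one has $p(\pi - t)/2 = \pi/2$), so the claimed equality at $(R, \pi - \pi/p)$ reduces to the algebraic identity $A\, u(R, \pi - \pi/p)^{p/2} = v(R)^{p/2}$; I would verify this by substituting the explicit formula for $R$ and simplifying using $S^2 = 1 + c^2 + 2c\cos(2\pi/p)$.

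Next I would verify that $(R, \pi - \pi/p)$ is a critical point of $F$ by checking $\partial_r F = \partial_t F = 0$, which read
\begin{equation*}
A(r^2 - 1)\, u^{p/2-1} = (r^2 - c)\, v^{p/2-1}, \qquad u^{p/2-1}\sin t = -\frac{2B}{pA}\, h_p'(t);
\end{equation*}
both follow from the same algebraic identities once we record $h_p'(\pi - \pi/p) = -p/2$ in the outer branch (which, for $p > 4$, coincides with the $p \le 4$ formula on $[\pi - 2\pi/p, \pi]$ after resolving the absolute value). For the range $2 \le p \le 4$, where $h_p(t) = -\cos(p(\pi-t)/2)$ is globally smooth, I would fix $t$, solve the $r$-critical equation for $r^*(t)$ implicitly, and reduce to the one-variable inequality $\Phi(t) := F(r^*(t), t) \ge 0$. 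By the envelope theorem $\Phi'(t) = \partial_t F$ at $r = r^*(t)$, and the resulting transcendental equation can be shown to admit $t = \pi - \pi/p$ as its only interior zero; together with the boundary values at $t \in \{0, \pi\}$ this forces $\Phi \ge 0$ with the stated equality case.

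For $p > 4$, $h_p$ is piecewise defined. On the outer flanks $[0, 2\pi/p]$ and $[\pi - 2\pi/p, \pi]$ it is a single cosine of half the frequency, so the argument from the $p \le 4$ case carries over with minor modifications. In the interior window $(2\pi/p, \pi - 2\pi/p)$, $h_p$ is the pointwise maximum of $|\cos(p(\pi - t)/2)|$ and $|\cos(pt/2)|$; since $h_p$ enters the inequality with a negative sign, replacing $h_p$ by either candidate strengthens the right-hand side, so it suffices to prove the inequality with each replacement separately. The main obstacle I expect is the global step $\Phi(t) \ge 0$: because equality at $(R, \pi - \pi/p)$ is saturated to first order, one cannot reach nonnegativity from convexity considerations alone, and the trigonometric factors $\csc(\pi/p), \sec(\pi/p)$ must be removed by careful use of the defining identity for $S$ until the left-hand side is exhibited as a manifest algebraic square. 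For $p > 4$ there is the additional subtlety of verifying, at the transition points $t = 2\pi/p$ and $t = \pi - 2\pi/p$ where $h_p$ has a derivative jump, that no spurious interior minimum of $F$ appears near these kinks.
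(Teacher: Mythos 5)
Your reduction to $t\in[0,\pi]$, the computation of the equality point (including the observation that $h_p(\pi-\pi/p)=0$, so equality reduces to $A\,u(R,\pi-\pi/p)^{p/2}=v(R)^{p/2}$), and the first-order criticality check all match the opening of the paper's proof and are fine. But the proposal stops exactly where the theorem begins. For $2\le p\le 4$ you propose to eliminate $r$ via the critical equation $\partial_r F=0$ and then show that the resulting one-variable function $\Phi(t)$ is nonnegative because "the resulting transcendental equation can be shown to admit $t=\pi-\pi/p$ as its only interior zero." No method is offered for this, and you yourself flag it as "the main obstacle." This is not a minor detail to be filled in: it is the entire analytic content of the theorem. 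The paper does not follow the envelope route at all; it substitutes $c=R(1-R\cos q)/(R-\cos q)$, passes to hyperbolic variables $2\cosh a=R+1/R$, $2\cosh b=r+1/r$, proves a quadratic minorant in $y=\cos(px/2)$ for the term $\cos q-\cos x$ (Lemma~\ref{lemaimpo}, via a third-derivative concavity argument), bounds $(1-ky)^{2/p}$ above by an explicit quadratic, and reduces the whole statement to a discriminant inequality $a_1^2-4a_0a_2\le 0$, which is then proved by a lengthy case analysis ($u\ge v$ and $u<v$) requiring two further lemmas. Nothing in your plan substitutes for this chain, and there is no reason to expect the uniqueness of the interior zero of $\Phi'$ to be any easier than the original inequality. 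You also do not verify that the infimum of $F$ in $r$ is attained at an interior critical point (i.e., the behaviour as $r\to 0^+$ and $r\to\infty$), nor that $r^*(t)$ is well defined and unique.

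The $p>4$ part also diverges from what is needed. On the left flank $[0,2\pi/p]$ the function is $h(t)=-\cos(pt/2)$, not the same cosine as on $[\pi-2\pi/p,\pi]$, so "the argument from the $p\le 4$ case carries over with minor modifications" is an unsupported claim. In the interior window your splitting of the maximum is logically admissible ($X\ge\max(f_1,f_2)$ iff $X\ge f_1$ and $X\ge f_2$ — though your stated justification, that replacing the max by one branch "strengthens" the inequality, has the direction backwards), but it leaves you with new inequalities for $|\cos(pt/2)|$ and $|\cos(p(\pi-t)/2)|$ on $(2\pi/p,\pi-2\pi/p)$ that are not proved. The paper avoids all of this with a two-line reduction: since $h$ already maps $[\pi-2\pi/p,\pi]$ onto $[-1,1]$, any $t\in[0,\pi-2\pi/p]$ admits $t'\in[\pi-2\pi/p,\pi]$ with $h(t)=h(t')$ and $\cos t\ge\cos t'$, so the inequality at $t$ follows from the already-established inequality at $t'$. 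In summary: the skeleton is right, but the proof has a genuine gap at its core, and the $p>4$ case needs the paper's reflection argument rather than branch splitting.
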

\begin{lemma}\label{lemsub}
For $2\le p\le 4$, $z=|z|e^{i\theta}$ the function $$\Phi_p(z)= -|z|^{p/2}\cos\frac{p}{2}(\pi-|\theta|),
$$ is subharmonic in $\mathbf{C}$ and $\mathcal{G}_p(z,w)=\Phi_p(zw)$ is pluri-subharmonic in $\mathbb{C}^2$.
\end{lemma}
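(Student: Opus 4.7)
The plan is first to prove that $\Phi_p$ is subharmonic on $\mathbb{C}$ by direct inspection of its singular locus, and then to deduce plurisubharmonicity of $\mathcal{G}_p$ from the standard fact that the composition of a subharmonic function with a holomorphic map is subharmonic. Using the identity $\cos\frac{p}{2}(\pi-|\theta|)=\cos\frac{p\pi}{2}\cos\frac{p|\theta|}{2}+\sin\frac{p\pi}{2}\sin\frac{p|\theta|}{2}$, I would rewrite $\Phi_p(re^{i\theta})$ as a fixed real-linear combination of $r^{p/2}\cos\frac{p|\theta|}{2}$ and $r^{p/2}\sin\frac{p|\theta|}{2}$. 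On the open upper half-plane $\{\theta\in(0,\pi)\}$ the absolute value may be dropped and the result becomes a linear combination of the real and imaginary parts of the principal branch of $z^{p/2}$, hence harmonic; the lower half-plane is analogous. So $\Phi_p$ is harmonic on $\mathbb{C}\setminus\mathbb{R}$.

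The next step is to analyze the distributional Laplacian on the real axis. Since $\Phi_p$ is continuous there, this Laplacian is a measure on $\mathbb{R}$ encoding the jump of $\partial_\theta\Phi_p$ weighted by $\frac{1}{r^2}$. At $\theta=0$ a direct calculation gives
$$\lim_{\theta\to 0^+}\partial_\theta\Phi_p-\lim_{\theta\to 0^-}\partial_\theta\Phi_p=-p\,r^{p/2}\sin\tfrac{p\pi}{2},$$
which is $\ge 0$ precisely when $\sin(p\pi/2)\le 0$, i.e.\ when $p\in[2,4]$. At $\theta=\pm\pi$ (the negative real axis) both one-sided angular derivatives vanish because each is proportional to $\sin\frac{p}{2}(\pi\mp\theta)$ evaluated at the endpoint. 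Thus no additional singular contribution appears, and $\Delta\Phi_p\ge 0$ as a distribution on $\mathbb{C}\setminus\{0\}$.

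To finish subharmonicity I would handle the origin either by observing that $\Phi_p$ is continuous on $\mathbb{C}$ (since $p/2\ge 1$ and $|\Phi_p(z)|\le|z|^{p/2}$), so the isolated point $0$ is removable for subharmonicity, or by a direct mean-value check: integrating the angular profile yields $\frac{1}{2\pi}\int_0^{2\pi}\Phi_p(\rho e^{i\phi})\,d\phi=-\frac{2\rho^{p/2}}{p\pi}\sin\frac{p\pi}{2}\ge 0=\Phi_p(0)$. For the plurisubharmonicity of $\mathcal{G}_p(z,w)=\Phi_p(zw)$, I would restrict to an arbitrary affine complex line $\lambda\mapsto(z_0+a\lambda,w_0+b\lambda)$ and observe that the restriction equals $\Phi_p\circ g$ with $g(\lambda)=(z_0+a\lambda)(w_0+b\lambda)$ holomorphic in $\lambda$; since subharmonicity is preserved under holomorphic substitution, this restriction is subharmonic, and plurisubharmonicity follows. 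The main bookkeeping hurdle is Step~2: the absolute value $|\theta|$ is what turns an otherwise harmonic expression into a genuinely subharmonic one, and the condition $p\in[2,4]$ is exactly the range for which $\sin(p\pi/2)\le 0$ and the jump on the positive real axis has the correct sign.
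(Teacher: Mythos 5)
Your proof is correct, and at the key step it takes a genuinely different route from the paper's. Where you compute the distributional Laplacian directly --- harmonicity off the real axis via the decomposition into $\Re(z^{p/2})$ and $\Im(z^{p/2})$, then the jump $-p\,r^{p/2}\sin\frac{p\pi}{2}\ge 0$ of the angular derivative across $\theta=0$ and the vanishing of both one-sided derivatives at $\theta=\pm\pi$ --- the paper instead represents $\Phi_p$ near the positive real axis as $\max\{F_+,F_-\}$ of the two local harmonic branches $F_\pm(z)=-|z|^{p/2}\cos\frac{p}{2}(\pi\mp\theta)$ and invokes subharmonicity of a maximum of harmonic functions. The two arguments encode the same sign condition: $F_+\ge F_-$ for small $\theta>0$ amounts to $-2\sin\frac{p\pi}{2}\sin\frac{p\theta}{2}\ge 0$, i.e.\ exactly your $\sin\frac{p\pi}{2}\le 0$; your version has the advantage of making explicit where $p\in[2,4]$ enters, a verification the paper's proof leaves implicit when it asserts the max representation. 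The treatment of the origin is the same (sub-mean value on circles centered at $0$; your value $-\frac{2\rho^{p/2}}{p\pi}\sin\frac{p\pi}{2}$ is in fact the correct one --- the paper's displayed constant is off by a harmless factor of $2\pi$). You also supply the plurisubharmonicity step (restriction to complex lines composed with the holomorphic map $(z,w)\mapsto zw$), which the paper states but does not argue. One immaterial slip: the singular part of $\Delta\Phi_p$ along the ray $\theta=0$ is $\frac{1}{r}\bigl(\partial_\theta\Phi_p|_{0^+}-\partial_\theta\Phi_p|_{0^-}\bigr)$ against arclength (equivalently weight $\frac{1}{r^2}$ against the measure $\delta(\theta)\,r\,dr\,d\theta$), not $\frac{1}{r^2}$ against arclength; the sign, which is all you use, is unaffected.
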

\begin{proof}[Proof of Lemma~\ref{lemsub}]
Let $2\le p\le 4$. $z_0=re^{i\theta}\in \mathbf{C}\setminus\{0\}$. If $\theta=0$, then near $z_0$, $\Phi_p(z)=\max\{-|z|^{p/2}\cos\frac{p}{2}(\pi-\theta), -|z|^{p/2}\cos\frac{p}{2}(\pi+\theta)\}$. Since $F_+(z)=-|z|^{p/2}\cos\frac{p}{2}(\pi-\theta)$ and $F_{-}=-|z|^{p/2}\cos\frac{p}{2}(\pi+\theta)$ are localy harmonic, it follows that $\Phi_p$ is subharmonic near $z_0$. If $\theta\neq 0$, then $\Phi_p$ coincides with $F_+$ or $F_{-}$ near $z_0$. Finally, since $$\frac{1}{2\pi r}\int_{|z|=r}\Phi_p(z) |dz|=-\frac{4 r^{p/2} \sin  \left[\frac{p \pi }{2}\right]}{ p}\ge 0=\Phi_p(0),$$ we obtain that $\Phi_p$ is subharmonic in $\mathbf{C}$. 
\end{proof}
\begin{lemma}\label{lemsub2}\cite[Lemma~3]{verb0},\cite{verb2}.
For $p\ge 4$, $z=|z|e^{i\theta}$ the function $$\Psi_p(z)=|z|^{p/2}h_p(\theta)$$ is subharmonic in $\mathbf{C}$ and the function $\mathcal{G}_p(z,w)=\Psi_p(zw)$ is pluri-subharmonic in $\mathbb{C}^2$.
\end{lemma}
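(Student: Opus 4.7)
The plan is to verify subharmonicity of $\Psi_p$ on $\mathbb{C}$ directly, and then obtain the plurisubharmonicity of $\mathcal{G}_p$ by the standard pullback principle under the holomorphic map $(z,w)\mapsto zw$.

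In polar coordinates $z=re^{i\theta}$ one computes
\[
\Delta\bigl(r^{p/2}h_p(\theta)\bigr)=r^{p/2-2}\Bigl(h_p''(\theta)+\tfrac{p^2}{4}h_p(\theta)\Bigr),
\]
so in the interior of any smooth piece of $h_p$, where $h_p$ coincides with one of $\pm\cos(p\theta/2)$ or $\pm\cos(p(\pi-\theta)/2)$, the ODE $h''+(p/2)^2h=0$ is satisfied and $\Psi_p$ is harmonic. In the middle sector $\theta\in(2\pi/p,\pi-2\pi/p)$, choosing a local branch of $z^{p/2}$, one can write
\[
\Psi_p(z)=\max\bigl\{|\re z^{p/2}|,\,|\re(e^{-ip\pi/2}z^{p/2})|\bigr\},
\]
and each term is the modulus of a locally defined harmonic function, hence subharmonic; the pointwise maximum of subharmonic functions is subharmonic, so $\Psi_p$ is subharmonic on this sector too.

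At each transition angle $\theta_0$ (the endpoints $\pm 2\pi/p$, $\pm(\pi-2\pi/p)$ and the internal switching rays of the middle-region maximum) I would check the one-sided derivative inequality $h_p'(\theta_0^+)\geq h_p'(\theta_0^-)$, which places a nonnegative delta measure on each transition ray in the angular part of $\Delta\Psi_p$ and, combined with the harmonic pieces, yields $\Delta\Psi_p\ge 0$ distributionally on $\mathbb{C}\setminus\{0\}$. Subharmonicity at the origin then follows from continuity of $\Psi_p$ together with the submean-value inequality
\[
\frac{1}{2\pi r}\int_{|z|=r}\Psi_p\,|dz|=\frac{r^{p/2}}{2\pi}\int_{-\pi}^{\pi}h_p(\theta)\,d\theta\ge 0=\Psi_p(0),
\]
whose nonnegativity is a direct piecewise computation, in the spirit of the integral identity already used in Lemma~\ref{lemsub}.

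Once $\Psi_p$ is subharmonic on $\mathbb{C}$, the function $\mathcal{G}_p(z,w)=\Psi_p(zw)$ is plurisubharmonic on $\mathbb{C}^2$: the map $\phi(z,w)=zw$ is entire on $\mathbb{C}^2$, and its restriction to any complex line in $\mathbb{C}^2$ is a polynomial in one complex variable, so the pullback $\Psi_p\circ\phi$ is subharmonic on every such line by the classical fact that the composition of a subharmonic function with a non-constant holomorphic map of one variable is subharmonic (the constant case being trivial). The main obstacle is the transition analysis: one must locate every switching ray in the middle region and verify the derivative inequality at each, via a case-by-case comparison of $|\cos(p\theta/2)|$ and $|\cos(p(\pi-\theta)/2)|$. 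This is exactly the technical core of Verbitsky's original argument cited in the statement, and I would follow that line of reasoning with the piecewise definition of $h_p$ recorded in Lemma~\ref{medium}.
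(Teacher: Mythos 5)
Your proposal is correct and follows essentially the same route as the paper: local subharmonicity away from the origin via harmonic pieces and maxima of (moduli of real parts of) locally defined harmonic branches, the sub-mean-value inequality at the origin via $\int_{-\pi}^{\pi}h_p(\theta)\,d\theta\ge 0$, and plurisubharmonicity of $\mathcal{G}_p$ by pulling back under the holomorphic map $(z,w)\mapsto zw$. The only differences are cosmetic: the paper avoids your one-sided derivative checks at the switching rays by representing $\Psi_p$ near every such ray as a maximum of harmonic functions of the form $r^{p/2}\cos\left(\tfrac{p}{2}(\theta_0+\theta)\right)$, and it verifies the nonnegativity of the angular integral by a symmetry cancellation over $\left[\tfrac{\pi}{2}-\tfrac{2\pi}{p},\,\tfrac{\pi}{2}+\tfrac{2\pi}{p}\right]$ rather than a brute-force piecewise computation.
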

\begin{proof}[Proof of Lemma~\ref{lemsub2}]
The function $\Psi_p(z)$ coincides with $\Phi_{p/2}(z)$ from the paper \cite{verb2}. For the completeness include its proof.
Let $z_0=re^{i\theta}\in \mathbf{C}\setminus\{0\}$. If $\theta\neq 0$,  then near $z_0$,  $\Psi_p$ coincides with a harmonic function, and so is subharmonic in $z_0$. If $\theta=0$, then $\Psi_p$ is equal to the maximum of several harmonic functions of the form $u(re^{i\theta}) = r^{p/2}\cos(p/2(\theta_0 +\theta))$.
Finally, since
$h_p(\pi/p-x)=-h(\pi/p+x)$ for $x\in[\pi/2-2\pi/p,\pi/2]$ and $h_p(\pi/2-x)=h(\pi/2+x)$, we obtain $$ \int_{\pi/2-2\pi/p}^{\pi/2+2\pi/p}h_p(x)dx=0.$$ Thus
$$\frac{1}{2\pi r}\int_{|z|=r}F(z) |dz|=\frac{2r^{p/2}}{2\pi r}\int_{0}^{\pi/2-2\pi/p}h_p(x) dx> 0=F(0),$$ we obtain that $F$ is subharmonic in $\mathbf{C}$.
\end{proof}

%\end{proof}
\section{Proof of Theorem~\ref{Kao}}
\subsection{Proof of the inequality statement}
\begin{proof}[Proof of inequality of Theorem~\ref{Kao}]

We use  Lemma~\ref{medium}. Assume that $f=g+\bar h$, where $g$ and $h$ are holomorphic functions on the unit disk. Then from Lemma~\ref{medium}, we have $$ (|g(z)|^2+c |h(z)|^2)^{\frac{p}{2}} \le a_{p,c}|g(z)+\overline{h(z)}|^p  - b_{p,c} \mathcal{G}_p(g(z),h(z)),$$ where
$a_{p,c}$ and $b_{p,c}$ are given in \eqref{apc} and \eqref{bpc}.
Then $$\int_{\mathbf{T}} (|g(z)|^2+c |h(z)|^2)^{\frac{p}{2}}\le a_{p,c} \int_{\mathbf{T}} |g(z)+\overline{h(z)}|^p - b_{p,c} \int_{\mathbf{T}}\mathcal{G}_p(g(z),h(z)).$$
Let $\theta=\arg(g(0)h(0))$. As $\mathcal{P}_p(z)= \mathcal{G}_p(g(z),h(z))$ is subharmonic for every $p\ge 2$, by sub-mean inequality we have that $$\int_{\mathbf{T}}\mathcal{G}_p(g(z),h(z))\ge \mathcal{G}_p(g(0),h(0))=-|g(0)h(0)|^ph_p(\theta)$$ where for $p\in[2,4]$, $$h_p(\theta)=-\cos\left(p\frac{\pi-|\theta|}{2}\right)\ge 0,$$ because $\Re(g(0)h(0))=0$, which means that $\theta=\pm \frac{\pi}{2}$ or  $\theta$ satisfies the condition \eqref{condit} respectively \eqref{condit1}. Similarly we treat the case $p\ge 4$.
\end{proof}
\subsection{Proof of the sharpness case}

Let \( g(z) = \left( \frac{1 - z}{1 + z} \right)^\gamma \) where \( \gamma > 0 \), \( z \in \mathbf{T} \), \( a \in \mathbb{C} \), and \( c \in \mathbb{R}_+ \). Consider the two integrals:

1. \( \int_{\mathbf{T}} |f(z)|^p \, |dz| \) where \( f(z) = g(z) +  \overline{a g(z)} \)

2. \( \int_{\mathbf{T}} \left( c|g(z)|^2 +  |a|^2 |g(z)|^2 \right)^{p/2} \, |dz| \)

On the unit circle \( \mathbf{T} \), we have \( \overline{g(z)} = (-1)^\gamma g(z) \), so:

\[
f(z) = \left( 1 + a(-1)^\gamma \right) g(z)
\]

Thus:

\[
|f(z)|^p = |1 + a(-1)^\gamma|^p \cdot |g(z)|^p
\]

The first integral is:

\[
\int_{\mathbf{T}} |f(z)|^p \, |dz| = |1 + a(-1)^\gamma|^p \cdot \int_{\mathbf{T}} |g(z)|^p \, |dz| = 4 |1 + a(-1)^\gamma|^p \int_0^{\pi/2} \tan^{pc} u \, du
\]

We now compute the second integral:

\[
\int_{\mathbf{T}} \left( c|g(z)|^2 +  |a|^2 |g(z)|^2 \right)^{p/2} \, |dz| = \int_{\mathbf{T}} \left( c + |a|^2 \right)^{p/2} |g(z)|^p \, |dz|.
\]

This simplifies to:

\[
= (c + |a|^2)^{p/2} \cdot \int_{\mathbf{T}} |g(z)|^p \, |dz| = 4 (c + |a|^2)^{p/2} \int_0^{\pi/2} \tan^{pc} u \, du.
\]

The ratio between the two integrals is:

\[
C^p_{p,c,\gamma}:=\frac{
\int_{\mathbf{T}} \left(c |g(z)|^2 + |a|^2 |g(z)|^2 \right)^{p/2} |dz|
}{
\int_{\mathbf{T}} |f(z)|^p \, |dz|
}
=
\frac{
(c + |a|^2)^{p/2}
}{
|1 + a(-1)^\gamma|^p
}.
\]

Let $q=\pi/p$ and \[
r = \frac{2c \cos\left( {q} \right)}{-1 + c + \sqrt{1 + c^2 + 2c \cos\left(2q\right)}}.
\]
Now if $a=-re^{-\frac{2\pi}{p} \imath}$, when $\gamma\to 1/p$ the constant $C^p_{p,c,\gamma}$ tends to

$$C^p_{p,c}= 2^{-p/2} \left(\left(1+c+\sqrt{1+c^2+2 c \cos (2q)}\right) \csc^2 q\right)^{p/2}.$$

Observe also that in this case $g(0)h(0) =e^{\imath(\pi-2\pi/p)}$, $\theta=\pi-2\pi/p$ which means that it is satisfied the condition \eqref{condit} and \eqref{condit1}, respectively. Moreover  $-\cos p/2(\pi-\theta)=1$. Observe that for $c\in(0,1)$ we have that  $r>1$ and $f$ is $1/r-$ quasiconformal harmonic and for $c>1$, $0<r<1$ and $f$ is $r$-quasiconformal harmonic. For $c=1$, $f$ is real function up to a complex factor. 
\section{Proof of Theorem~\ref{theo123}}
\begin{proof}[Proof of Theorem~\ref{theo123}]
Since \(\cos(-t) = \cos t\) and \(h(-t) = h(t)\) for \(t \in [-\pi, \pi]\), it suffices to prove the inequality for \(t \in [0, \pi]\). Similarly, because \(\cos(2\pi - t) = \cos t\) and \(h(2\pi - t) = h(t)\), the interval \(t \in [\pi, 2\pi]\) can also be reduced to the case \(t \in [0, \pi]\). In the same way, for \(t \in [-2\pi, -\pi]\), we have \(\cos(2\pi + t) = \cos t\) and \(h(2\pi + t) = h(t)\), so this interval reduces to \(t \in [-\pi, 0]\), which is again covered by the symmetry of the function.

Therefore, by these symmetries, it is sufficient to prove the inequality for \(t \in [0, \pi]\).

Let us use shorthand notation \begin{equation}\label{pq}q=\frac{\pi}{p}.\end{equation}
Expressing $c$ as a function of $R$:
\[
    c = \frac{R (1-R \cos q)}{R-\cos q}.
\]
Since $c > 0$, it follows that
\[
    \cos q < R < \sec q.
\]

We then obtain
\begin{align*}
    A &= \left(\frac{R}{R-\cos q}\right)^{p/2}, \\
    B &= 2^{1-p/2} R (R - \cos q)^{-p/2} \left(1+R^2-2R \cos q\right)^{(p-2)/2} \sin(\pi/p).
\end{align*}
Then our inequality becomes \begin{equation}\label{bein}\begin{split}\left(\frac{1}{2r}+\frac{r}{2}+ \cos t\right)^{p/2}&-\left(\frac{\left(1+r^2\right) R-\left(r^2+R^2\right) \cos q}{2r R}\right)^{p/2}\\&+  \left(\frac{1+R^2-2 R \cos q}{2R}\right)^{-1+\frac{p}{2}}\cos \left[\frac{1}{2} p (\pi -t)\right] \sin q\ge 0.\end{split}\end{equation}
The minimum value is zero and is attained for $t = \pi - \pi/p$ and $r = R$.
\subsection{Proof of the case $p\in[2,4]$}

This section contains the proof of the case when $h(t) =-\cos(\frac{p}{2}(\pi-t)$ for $t\in[0,\pi]$ and $p\ge 2$ arbitrary.

Introducing the transformations $0\le a\le b$:
\[
    2\cosh a = R + \frac{1}{R}, \quad 2\cosh b = r + \frac{1}{r},
\]
our inequality \eqref{ourineq} becomes
\[\begin{split}
    -(\cosh b &- \cos q \cosh(a \pm b))^{p/2}+ (\cosh b + \cos t)^{p/2} \\
   & + \left(\cosh a - \cos q\right)^{-1 + p/2} \cos\left(\frac{p}{2} (\pi - t)\right) \sin q \geq 0,\end{split}
\] where the equality is attained for $a-b$ instead of $a\pm b$ and  if $a=b$ and $t=\pi -\pi/p$.

Since
$
    \cos q < R < \sec q,
$
we have the constraint
\begin{equation}\label{constr}
    \cosh a \leq \frac{1}{2}\left(\cos q + \sec q\right).
\end{equation}

Thus, it suffices to prove that for $x \in [0,\pi]$, and $b \geq a \geq 0$, after using the change
 $t= \pi - x$,
the following inequality holds under the given constraints.

\begin{align*}
    H(x) := & -\left(\cosh b - \cos q  \cosh(b - a)\right)^{p/2} + (\cosh b - \cos x)^{p/2} \\
    & + \left(\cosh a - \cos q \right)^{-1 + \frac{p}{2}} \cos\left(\frac{p x}{2}\right) \sin q  \geq 0.
\end{align*}
Equality is attained for $a=b$ and $x=\pi/p$.

We continue  by the following lemma
\begin{lemma}\label{lemaimpo}
For  \(x\in[0,\pi]\) we have the following inequality:
\begin{equation}\label{thisin}
\cos {q} - \cos x + \frac{2 \cos \frac{p x}{2} \sin {q}}{p} - \cos^2 \frac{p x}{2} \left(-1 + \cos {q} + \frac{2 \sin {q}}{p}\right) \geq 0.
\end{equation}
\end{lemma}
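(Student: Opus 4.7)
The plan is to simplify \eqref{thisin} using half-angle identities and then analyze the resulting inequality. Writing $\cos x = 1 - 2\sin^2(x/2)$, $\cos(px/2) = 1 - 2\sin^2(px/4)$, and $\cos^2(px/2) = (1 - 2\sin^2(px/4))^2$, after routine regrouping I expect \eqref{thisin} to be equivalent to
\[
\Phi(x) := \sin^2(x/2) - \alpha\sin^2(px/4) - \beta\sin^4(px/4) \ge 0,
\]
with $\alpha = 2(1-\cos q) - 2\sin q/p$, $\beta = 4\sin q/p - 2(1-\cos q)$, and $\alpha + \beta = 2\sin q/p$. The non-negativity $\alpha,\beta \ge 0$ on $p \in [2,4]$ follows from the Jordan-type bound $1 \le p\tan(\pi/(2p)) \le 2$. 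The equality cases $x = 0$ and $x = \pi/p$ are double zeros of $\Phi$: vanishing of $\Phi(\pi/p)$ uses $\alpha/2 + \beta/4 = (1-\cos q)/2$, and vanishing of
\[
\Phi'(x) = \tfrac12\sin x - \tfrac{p}{4}\sin(px/2)\bigl[\alpha + 2\beta\sin^2(px/4)\bigr]
\]
at $x = \pi/p$ uses $\alpha + \beta = 2\sin q/p$ together with $\sin(\pi/2) = 1$.

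Next, to deduce $\Phi \ge 0$ on $[0,\pi]$, I would split into the subintervals $[0, \pi/p]$ and $[\pi/p, \pi]$. On the second subinterval, $\sin^2(x/2)$ is monotonically increasing and the right-hand side is bounded above by $\alpha + \beta = 2\sin q/p$; at the worst point $x = 2\pi/p$ the inequality reduces to $\sin q \ge 2/p$, which is classical for $p \ge 2$. On the first subinterval $[0,\pi/p]$, with double zeros at both endpoints, I would use a Rolle-type count: if $\Phi$ becomes negative somewhere in the interior, then $\Phi'$ must have at least four sign changes on $(0,\pi/p)$. I would rule this out by verifying $\Phi''(0) = \tfrac12 - \alpha p^2/8 \ge 0$ and $\Phi''(\pi/p) = \tfrac12\cos q - \beta p^2/8 \ge 0$, which reduce to the one-variable inequalities
\[
p^2(1-\cos q) - p\sin q \le 2 \quad \text{and} \quad p^2(1-\cos q) + 2\cos q \ge 2p\sin q.
\]
Both can be checked on $[2,4]$ by direct evaluation at $p = 2$ (equality, consistent with $\beta = 0$ there) and $p = 4$, together with monotonicity in $p$.

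The main obstacle is then the residual sign-count for $\Phi'$ on $(0, \pi/p)$, namely ruling out additional zeros of $\Phi'$ beyond the endpoints. This amounts to showing that the ratio
\[
\frac{\sin x}{\sin(px/2)\bigl[\alpha + 2\beta\sin^2(px/4)\bigr]}
\]
is monotone (or has a unique critical point) on $(0, \pi/p)$. I expect this step to require another elementary one-parameter trigonometric estimate, perhaps obtained after the substitution $u = px/4$ (which reduces the problem to comparing $\sin(2u/p)$ with a specific polynomial in $\sin u$), but not a conceptual leap beyond the calculus already set up.
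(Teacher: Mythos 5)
Your algebraic reduction is correct: with $s=\sin(px/4)$ the inequality \eqref{thisin} is indeed equivalent to $\Phi(x)=\sin^2(x/2)-\alpha\sin^2(px/4)-\beta\sin^4(px/4)\ge 0$ with your $\alpha,\beta$, both nonnegative, and the double zeros at $x=0$ and $x=\pi/p$ are as you describe. But the argument has two genuine gaps. First, your treatment of the ``second subinterval'' does not cover $(\pi/p,\,2\pi/p)$. The estimate ``LHS increasing, RHS $\le\alpha+\beta$, check at the worst point $x=2\pi/p$'' is only valid for $x\ge 2\pi/p$, where $\sin^2(x/2)\ge\sin^2(\pi/p)\ge 2\sin q/p=\alpha+\beta$ by Jordan's inequality. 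For $x$ just above $\pi/p$ one has $\sin^2(x/2)\approx\sin^2(q/2)=\alpha/2+\beta/4$, which is strictly smaller than $\alpha+\beta$, so the crude comparison fails there; since $\Phi$ has a double zero at $\pi/p$, no first-order bound can work on that subinterval and you are thrown back onto the same delicate analysis as on $[0,\pi/p]$. (The paper avoids this by a different reduction: for $x\ge 2\pi/p$ it replaces $x$ by an $x'\in[0,2\pi/p]$ with the same value of $\cos(px/2)$ and a larger $\cos x'$, so the whole problem lives on $[0,2\pi/p]$.) Second, and more seriously, the crux of the proof --- ruling out extra zeros of $\Phi'$ on $(0,\pi/p)$, i.e.\ the monotonicity or unimodality of $\sin x/\bigl(\sin(px/2)[\alpha+2\beta\sin^2(px/4)]\bigr)$ --- is exactly the step you defer as ``another elementary one-parameter trigonometric estimate.'' This is where essentially all of the work lies: the paper's proof substitutes $y=\cos(px/2)$ and devotes a separate nested lemma (Lemma~\ref{leka}, itself requiring two further differentiations) to showing that the third derivative of the resulting function of $y$ is nonpositive, from which concavity of $H'$ and the boundary data $H'(0)=0$, $H'(1)<0$, $H(0)=H(1)=0$ yield the conclusion. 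Until your deferred estimate is actually proved, the proposal is an outline rather than a proof.

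Two smaller points. The lemma must hold for all $p\ge 2$, not only $p\in[2,4]$: the later case $p\ge 4$ of Theorem~\ref{theo123} is reduced to inequality \eqref{bein} with the cosine nonlinearity, so your verifications of $\alpha,\beta\ge 0$ and of $\Phi''(0)\ge 0$, $\Phi''(\pi/p)\ge 0$ need to be carried out on all of $[2,\infty)$ (they are true there, e.g.\ $1\le p\tan(\pi/(2p))\le 2$ holds for all $p\ge 2$ since $p\tan(\pi/(2p))$ decreases from $2$ to $\pi/2$). Also, ``checked by evaluation at $p=2$ and $p=4$ together with monotonicity in $p$'' is not a proof of the two auxiliary inequalities $p^2(1-\cos q)-p\sin q\le 2$ and $p^2(1-\cos q)+2\cos q\ge 2p\sin q$; the monotonicity itself would have to be established.
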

\begin{proof}
Then, equation~\eqref{thisin} is equivalent to verifying the inequality:
\[
H(y) := \cos q - \cos\left( \frac{2 \arccos y}{p} \right) + \frac{2 y \sin q}{p} - y^2 \left(-1 + \cos q + \frac{2 \sin q}{p}\right) \geq 0,
\]
where \( y = \cos\left( \frac{p x}{2} \right) \in [-1,1] \), and \( x \in \left[0, \frac{2\pi}{p} \right] \).

If \( x \ge \frac{2\pi}{p} \), then there exists \( x' \in \left[0, \frac{2\pi}{p} \right] \) such that
\[
\cos\left( \frac{p x'}{2} \right) = \cos\left( \frac{p x}{2} \right),
\]
which implies \( -\cos x \ge -\cos x' \). Therefore, proving the inequality for \( x' \) suffices.

Now, letting \( y = \cos s \), we compute the third derivative of \( H \) with respect to \( y \):
\[
H'''(\cos s) = \frac{ \csc^5 s \left( 6p \cos\left( \frac{2s}{p} \right) \sin(2s) - 2 \left( -4 + p^2 + 2(2 + p^2) \cos^2 s \right) \sin\left( \frac{2s}{p} \right) \right) }{p^3}.
\]

To continue, we first prove the following lemma.
\begin{lemma}\label{leka}
For \(s\in[0,\pi]\), \(H'''(\cos s)\leq 0\).
\end{lemma}

\begin{proof}[Proof of Lemma~\ref{leka}]

Define
\[
\Phi(s) = 6p \cos\left( \frac{2s}{p} \right) \sin(2s)
- 2\left( -4 + p^2 + 2(2 + p^2) \cos^2 s \right) \sin\left( \frac{2s}{p} \right).
\]

Then we have
\[
\Phi'(s) = \frac{4(-1 + p^2)}{p} \left( -4 \cos\left( \frac{2s}{p} \right) \sin^2 s
+ p \sin(2s) \sin\left( \frac{2s}{p} \right) \right).
\]

We aim to prove that \( \Phi'(s) \le 0 \).
Since \( \sin s \cdot \sin\left( \frac{2s}{p} \right) \ge 0 \), the inequality is equivalent to showing:
\[
\phi(s) := \cot s - \frac{2}{p} \cot\left( \frac{2s}{p} \right) \le 0.
\]

Now we compute the derivative of \( \phi \):
\[
\phi'(s) = \frac{\csc^2\left( \frac{2s}{p} \right) \left( -4s + p \sin\left( \frac{4s}{p} \right) \right)}{p^3} \le 0,
\]
because \( \sin y \le y \) for every \( y \ge 0 \).
Thus, \( \phi(s) \le \lim_{s \to 0} \phi(s) = 0 \), which implies
\[
\Phi'(s) \le 0 \quad \text{and hence} \quad \Phi(s) \le \Phi(0) = 0.
\]

\end{proof}
Since  \(H'''(y)\leq 0\), we get that \(H'\) is concave. As \(H'(0)=0\) and
\[
H'(1) =2-\frac{4}{p^2}-2 \cos \left[\frac{\pi }{p}\right]-\frac{2 \sin \left[\frac{\pi }{p}\right]}{p} < 0
\] and   \(H(0) = H(\cos{q}) = 0\)
there exists exactly one point \(x_\circ\in(0,1)\) so that \(H'(x_\circ)=0\). Then \(H\) is increasing in \([0,x_\circ]\) and decreasing in \([x_\circ, 1]\).  It follows that \(H(x) \geq 0\) for every \(x\). This finishes the proof of Lemm~\ref{lemaimpo}.
\end{proof}

Now in view of \eqref{thisin},  our inequality reduces to:
\begin{equation}
\begin{split}
-\left( \cosh b - \cos q \cosh (b - a) \right)^{p/2}
&+ \left( \cosh b - \cos q - \frac{2 \cos \frac{p x}{2} \sin q}{p} \right. \\
&\quad + \cos^2 \frac{p x}{2} \left(-1+\cos q+\frac{2 \sin q}{p} \right) \bigg)^{p/2} \\
&+ \left( \cosh a - \cos q \right)^{-1+\frac{p}{2}}
\cos \frac{p x}{2} \sin q \geq 0.
\end{split}
\end{equation}

Equivalently, we write:
\begin{equation}
\begin{split}
\cosh b - \cos q - \frac{2 \cos \frac{p x}{2} \sin q}{p}
&+ \cos^2 \frac{p x}{2} \left(-1+\cos q+\frac{2 \sin q}{p}\right) \\
&\geq \left( \left( \cosh b - \cos q \cosh (b-a) \right)^{p/2} \right. \\
&\quad - \left( \cosh a - \cos q \right)^{p/2-1} \cos \frac{p x}{2} \sin q \bigg)^{2/p}.
\end{split}
\end{equation}

Now, let \( y = \cos \frac{p x}{2} \), and define
\begin{equation*}
k = \frac{(\cosh a - \cos q)^{-1+\frac{p}{2}} \sin q}
{\left(\cosh b - \cos q \cosh (b - a)\right)^{p/2}}.
\end{equation*} Then the inequality becomes
\begin{equation}
\begin{split}
\cosh b - \cos q - \frac{2y \sin q}{p}
&+ y^2 \left(-1 + \cos q + \frac{2\sin q}{p}\right) \\
&\geq \left( \cosh b - \cos q \cosh (b-a) \right) \\
&\quad \times \left( 1 -ky \right)^{2/p}.
\end{split}
\end{equation}
Now introduce  the change of variables $(u,v)$:
\begin{equation*}
-\cos q \cosh (a-b) + \cosh b = u (1-\cos {q}),
\end{equation*}
\begin{equation*}
\cosh a - \cos q = v (1-\cos {q}),
\end{equation*}
and notice that $u,v\ge 1$, and in view of \eqref{constr}, $v\le  \frac{1}{2} \left(1+\sec q\right)$ and the inequality is an equality precisely when $u=v=1$.
Then we obtain:
\begin{equation*}
k = \frac{\left(\frac{v}{u}\right)^{p/2}  \cot \frac{\pi }{2 p}}{v}.
\end{equation*}

Our inequality is then equivalent to:
\begin{equation}\label{ejte2}
\begin{split}
\cosh b - \cos q - \frac{2 y \sin q}{p}
&+ y^2 \left(-1+\cos q+\frac{2 \sin q}{p}\right) \\
&\geq u (1-\cos q) \left(1-\frac{\left(\frac{v}{u}\right)^{p/2} y \cot \frac{\pi }{2 p}}{v}\right)^{2/p}.
\end{split}
\end{equation}

%we obtain: $$k=2^{1-\frac{p}{2}} s^{p/2} \text{Cot}\left[\frac{\pi }{2 p}\right] \left(2-t+t \sec q\right)^{\frac{1}{2} (p-2)},$$
%\begin{equation*}
%\cosh b = \frac{P + Q}{R},
%\end{equation*}
%where:
%\begin{align*}
%P &= -4 + 2t(\cos q - 1), \\
%Q &= \sqrt{2} \sqrt{(1 + s^2 (t-1)) t (4(2-t) \cos q + t (3+\cos \frac{2\pi }{p}))}, \\
%R &= 4s (t-1).
%\end{align*}

\textit{Claim:} for $y\in[0,1/k]$, \begin{equation}\label{secondquad}1-\frac{2 k y}{p}-\frac{k^2 (p-2) y^2}{p^2}\ge (1-k y)^{2/p}.\end{equation}

To prove the claim, observe that for $$\phi(y)=-1+(1-k y)^{2/p}+\frac{2 k y}{p}+\frac{k^2 (p-2) y^2}{p^2},$$ $$\phi'''(y)= -\frac{4 k^3 (p-2) (-1+p) (1-k y)^{-3+\frac{2}{p}}}{p^3}\le 0$$ and thus $\phi''$ is decreasing. As $\phi''(0)=0$, it follows that $\phi''(y)<0$ and thus $\phi'$ is decreasing. Finally, $\phi'(0)=0$ and thus $\phi(y)<0$. Thus $\phi$ is decreasing which mean $\phi(y)<\phi(0)$ which coincides with \eqref{secondquad}.

Then in view of  \eqref{secondquad}, inequality \eqref{ejte2} will follow from the following quadratic inequality  on $y$ \begin{equation}
\label{firstquad}
\begin{split}
    \cosh(b) &- \cos\left({q}\right) - \frac{2y \sin\left({q}\right)}{p}
    + y^2 \left(-1 + \cos\left({q}\right) + \frac{2 \sin\left({q}\right)}{p} \right) \\
    &\geq u \left(1 - \cos\left({q}\right)\right)
    \left(1 - \frac{2 k y}{p} - \frac{k^2 (p-2) y^2}{p^2}\right).
\end{split}
\end{equation}

Since \[
\cosh b = \frac{M + N}{K},
\]
where:
\[
\begin{aligned}
M &= 2 \sqrt{(-1 + v) \cos^2 q \left(1 - u^2 + (1 + u^2 - 2v) \cos q\right)\left(-1 - v + (-1 + v) \cos q\right)}, \\
N &= u + uv - 2uv \cos q + u(-1 + v) \cos 2q, \\
K &= 2 + (2 - 4v) \cos q.
\end{aligned}
\]

Then the previous inequality can be written as $$a_2y^2+a_1y + a_0\ge 0,$$ where

\begin{align*}
a_0 &= \frac{\cos(q) \left(U + \sqrt{V}\right)}{1 + (1 - 2v) \cos(q)} \\[1ex]
a_1 &= -\frac{2 \left(v - u \left(\frac{v}{u}\right)^{p/2} \right) \sin(q)}{p v} \\[1ex]
a_2 &= \frac{(p-2) u \left(\frac{v}{u}\right)^p \left(1 + \cos(q)\right)
+ p v^2 \left(p \left(-1 + \cos(q)\right) + 2 \sin(q)\right)}{p^2 v^2}.
\end{align*}

Here $$U=1 + 2u - 3uv + (1 - 2v + u(-2 + 3v)) \cos\left({q}\right) $$ and $$V=(v - 1) \left(1 - u^2 + (1 + u^2 - 2v) \cos\left({q}\right)\right) \left(-1 - v + (-1 + v) \cos\left({q}\right)\right).$$
Our goal is to show that \begin{equation}\label{discrim}a_1^2-4 a_0 a_2\le 0.\end{equation}
We divide the proof into two cases\\
\textbf{{(i) Proof of the case $u\ge v$.}}

\begin{lemma}\label{lemapm}
Let $$g(u,v)=-\cos\left({q}\right) \left(U+ \sqrt{V}\right)$$

Then $g(u,v)\ge g(v,v)\ge 0$ for $1\le v\le w$ and $u\ge v$, for $$w=\frac{1}{2} \left(1+\sec q\right).$$ Moreover if $h(v)=\frac{g(u,v)-g(v,v)}{u-v}$, then $h(v)\ge h(1)= \cos{q}\left(1-\cos {q}\right)$.
\end{lemma}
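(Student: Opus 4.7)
The approach is to exploit the perfect-square structure of $V$ at $u = v$, and then reduce each claim to a polynomial comparison by rationalising the $\sqrt{V}$-difference.

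On the diagonal, at $u = v$ the polynomial $1 - u^2 + (1 + u^2 - 2v)\cos q$ factors as $(1-v)[(1+v) + (1-v)\cos q]$ while $-(1+v) - (1-v)\cos q$ is its negative; setting $E(v) := (1+v) + (1-v)\cos q$, this gives $V(v,v) = (v-1)^2 E(v)^2$ and $\sqrt{V(v,v)} = (v-1)E(v)$. A parallel factorisation yields $U(v,v) = -(v-1)[(1+3v) + (1-3v)\cos q]$, so after cancellation one obtains the closed form
\[
g(v,v) = 2v(v-1)\cos q\,(1-\cos q),
\]
which is non-negative for $v \geq 1$ and $q \in (0, \pi/2]$, establishing the second half of the first claim.

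For the inequality $g(u,v) \geq g(v,v)$, I would compute $U(u,v) - U(v,v) = -(u-v)(3v-2)(1-\cos q)$, and, setting $D(u,v) := u^2(1-\cos q) + 2v\cos q - (1+\cos q)$ so that $V(u,v) = (v-1) D(u,v) E(v)$, obtain $V(u,v) - V(v,v) = (u-v)(u+v)(v-1)(1-\cos q) E(v) \geq 0$. Rationalising $\sqrt{V(u,v)} - \sqrt{V(v,v)}$ and substituting $\sqrt{V(v,v)} = (v-1)E(v)$, the claim reduces (for $u > v$) to
\[
(3v-2)\sqrt{V(u,v)} \geq (v-1)(u - 2v + 2) E(v).
\]
If $u \leq 2v - 2$ the right-hand side is non-positive and we are done; otherwise, squaring and cancelling the positive factor $(v-1)E(v)$ leaves the polynomial target $(3v-2)^2 D(u,v) \geq (v-1)(u-2v+2)^2 E(v)$, a quadratic in $u$ whose leading coefficient simplifies to $(8v^2 - 12v + 5) - (8v^2 - 10v + 3)\cos q$ (positive in the admissible range) and whose slack at $u = v$ is $8v(v-1)^2 E(v) \geq 0$.

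For the refined bound $h(v) \geq h(1) = \cos q(1 - \cos q)$, the same decomposition yields
\[
h(v) = \cos q(1-\cos q)(3v-2) - \cos q \cdot \frac{(u+v)(v-1)(1-\cos q) E(v)}{\sqrt{V(u,v)} + \sqrt{V(v,v)}},
\]
so the claim reduces (for $v > 1$) to $3\sqrt{V(u,v)} \geq (u - 2v + 3) E(v)$, trivial when $u \leq 2v - 3$, and otherwise, after squaring, to the polynomial statement $9(v-1) D(u,v) \geq (u - 2v + 3)^2 E(v)$, treated analogously. The hard part will be the detailed verification of these quadratic-in-$u$ polynomial inequalities: the constraint $v \leq w = (1 + \sec q)/2$ inherited from \eqref{constr} is essential, as it controls the sign of the leading coefficient and the location of the quadratic's minimum over the admissible region, and the final sign analysis ultimately rests on a delicate expansion in $v$ and $\cos q$.
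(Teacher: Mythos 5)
Your algebraic identities all check out and are in several places more explicit than the paper's: the factorizations $V(v,v)=(v-1)^2E(v)^2$ and $V(u,v)=(v-1)D(u,v)E(v)$, the closed form $g(v,v)=2v(v-1)\cos q\,(1-\cos q)$, the slack $8v(v-1)^2E(v)$ at $u=v$, and your exact formula for $h(v)$ are all correct. For the middle claim $g(u,v)\ge g(v,v)$ your reduction is also completable: the quadratic $(3v-2)^2D(u,v)-(v-1)(u-2v+2)^2E(v)$ has positive leading coefficient under $v\le w$ (as you note), and its vertex lies at a negative value of $u$, so nonnegativity at $u=v$ already forces nonnegativity for all $u\ge v$. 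You have, however, deferred exactly this verification, and the deferral matters much more for the last claim.

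The genuine gap is in the third claim, and it is not fixable: the inequality $h(v)\ge h(1)$ is false as stated, so the polynomial target $9(v-1)D(u,v)\ge(u-2v+3)^2E(v)$ that you propose to treat "analogously" cannot be proved. Your own formula for $h(v)$ exposes this: letting $u\downarrow v$ gives $h(v)\to 2(v-1)\cos q\,(1-\cos q)$, which is strictly below $h(1)=\cos q\,(1-\cos q)$ for every $v\in(1,3/2)$, and $w\le 3/2$ whenever $p\ge 3$, so this covers essentially the whole admissible range. Concretely, at $q=\pi/3$, $v=6/5$, $u=7/5$ one finds $U=-0.82$, $V=0.2856$, $g(u,v)\approx 0.1428$, $g(v,v)=0.12$, hence $h(v)\approx 0.114<0.25=h(1)$, while your squared target reads $1.224\ge 8.4$. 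The correct inequality is the reverse one, $h(v)\le h(1)$, which is what the paper actually invokes in the sequel; the paper's own one-line argument rests on the identity $h(v)-h(1)=\cos q\,(T+\sqrt V)/(u-v)$ with $T\ge0$, but the sign of $\sqrt V$ there is wrong --- the correct identity is $\cos q\,(T-\sqrt V)/(u-v)$, so what must be shown is $T^2\le V$, not $T\ge 0$. In short, the statement you were asked to prove is itself erroneous, and the "hard part" you postponed is precisely where the contradiction would have surfaced.
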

\begin{proof}[Proof of Lemma~\ref{lemapm}]

We first have $$g(v,v)=2 (-1+v) v \left(1-\cos q\right) \cos q$$ and then this function is positive for $v\in [1,w]$. Now we prove that $k(u,v)=g(u,v)-g(v,v)\ge 0$.

Then  $$k(u,v)=\cos q \left(W-\sqrt{V}\right)$$

where $$W=-1-2 (-1+v) v+u (-2+3 v)+\left(-1+u (2-3 v)+2 v^2\right) \cos q$$ and $$V=(-1+v) \left(1-u^2+\left(1+u^2-2 v\right) \cos q\right) \left(-1-v+(-1+v) \cos q\right).$$

Then by direct computation we find that $$h(v) -h(1)=\cos{q} \frac{T+\sqrt{V}}{u-v} $$
where $$T=(1+3 u-2 v) (-1+v)+(-1+v) (1-3 u+2 v) \cos q$$

Then it is clear that $T\ge 0$ and thus $h(v)\ge h(1)=\cos{q}\left(1-\cos q\right)$.
\end{proof}
We need the following lemma
\begin{lemma}\label{shtune} For $p\in[2,\infty)$ and $b\in(0,1)$ we have
$$\psi(p)=\frac{4 \left(b-b^{p/2}\right)^2}{(1-b)^2 b (p-2)^2}\le 1.$$
\end{lemma}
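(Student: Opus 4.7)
The plan is to reduce the bound $\psi(p)\le 1$ to two standard elementary inequalities via a clean substitution. Both sides are nonnegative (since $b\in(0,1)$ and $p\ge 2$ force $b\ge b^{p/2}$), so I would first take square roots to obtain the equivalent form
\[
\frac{2(b-b^{p/2})}{(1-b)\sqrt{b}\,(p-2)}\le 1,
\]
interpreting the $p=2$ case as the appropriate limit. Then I substitute $b=e^{-t}$ with $t>0$. Using $b-b^{p/2}=e^{-t}\bigl(1-e^{-(p-2)t/2}\bigr)$, $\sqrt{b}=e^{-t/2}$, and $1-b=1-e^{-t}$, the inequality becomes
\[
\frac{2\,e^{-t/2}\bigl(1-e^{-(p-2)t/2}\bigr)}{(p-2)\,(1-e^{-t})}\le 1.
\]

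Next, I would apply the elementary bound $1-e^{-s}\le s$ (for $s\ge 0$) with $s=(p-2)t/2$. This yields $1-e^{-(p-2)t/2}\le (p-2)t/2$, and after cancelling the factor $(p-2)$ it suffices to prove the stronger statement
\[
\frac{t\,e^{-t/2}}{1-e^{-t}}\le 1,\qquad\text{i.e.,}\qquad t\le e^{t/2}-e^{-t/2}=2\sinh(t/2),
\]
which is immediate from $\sinh x\ge x$ for $x\ge 0$.

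I do not foresee a serious obstacle. The content of the argument lies in noticing that the substitution $b=e^{-t}$ completely decouples the $p$-dependence from the $b$-dependence and reduces everything to the two classical bounds $1-e^{-s}\le s$ and $\sinh x\ge x$. Both of these are sharp at $0$, consistent with the fact that the lemma's bound becomes tight in the joint limit $p\to 2^{+}$, $b\to 1^{-}$.
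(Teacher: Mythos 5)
Your proof is correct, but it takes a genuinely different route from the paper. The paper proves the bound by showing that $\psi$ is monotone decreasing in $p$ (this requires computing $\psi'(p)$ and a nested sub-argument that $\phi(p)=2b+b^{p/2}(-2+(p-2)\log b)\ge 0$ via $\phi'(p)>0$), and then verifying the worst case $\lim_{p\to 2}\psi(p)=\frac{b\log^2 b}{(1-b)^2}\le 1$ by the substitution $b=e^{-2c}$ and the inequality $c\le\sinh c$. You instead take square roots, substitute $b=e^{-t}$, and dispatch the $p$-dependence in one stroke with $1-e^{-s}\le s$ applied to $s=(p-2)t/2$, leaving exactly the same final kernel $t\le 2\sinh(t/2)$. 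Your decoupling of the $p$- and $b$-dependence eliminates the differentiation in $p$ entirely and is shorter and more elementary; the paper's monotonicity argument gives the extra (unused) information that the supremum over $p$ is attained in the limit $p\to 2^{+}$, which your remark about sharpness at $p\to 2^{+}$, $b\to 1^{-}$ recovers anyway. The only point to make explicit is the observation you already flag: both sides vanish to the same order at $p=2$, so the statement there is understood as the limit, consistent with the paper.
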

\begin{proof}[Proof of Lemma~\ref{shtune}]
Observe first that $$\psi'(p)=\frac{4 \left(-b+b^{p/2}\right) \left(2 b+b^{p/2} (-2+(p-2) \log b)\right)}{(-1+b)^2 b (p-2)^3}.$$ Prove that $\psi'(p)<0$. Is is equivalent with the inequality $$\phi(p)=2 b+b^{p/2} (-2+(p-2) \log b)\ge 0.$$ Since $$\phi'(p)=\frac{1}{2} b^{p/2} (p-2) \log^2 b>0$$ it follows that $\phi$ is increasing in $[2,\infty)$. Thus $\phi(p)\ge \phi(2)=0$. Thus $\psi$ is decreasing, and we need to prove that $$\lim_{p\to 2}\psi(p)=\frac{b \log^2 b}{(-1+b)^2}\le 1.$$ The last inequality after the change $b=e^{-2c}$, with $c\ge 0$ is equivalent with $c\le\sinh c$.  This completes the proof of lemma.
\end{proof}
Recall that  for $$h(v)=\frac{g(u,v)-g(v,v)}{u-v},$$ where $g$ is defined in Lemma~\ref{lemapm} we proved the inequality $$h(v)\le h(1)=\left(1-\cos q\right) \cos q.$$
Then to prove the inequality
$a_1^2-4 a_0 a_2\le 0$ it is enough to prove \[
\frac{4 \left(v - u \left(\frac{v}{u}\right)^{p/2}\right)^2 \sin^2 q}{p^2 (u - v) v^2}
$$ $$- \frac{4 (1 - \cos q)\cos q \left[(p-2) u \left(\frac{v}{u}\right)^p (1 + \cos q) + p v^2 \left(p (-1 + \cos q) + 2 \sin q\right)\right]}{p^2 v^2 \left(1 + (1 - 2v)\cos q\right)}
\le 0.
\]

By Lemma~\ref{shtune}, with $b=v/u$, it is enough to prove:

$$\frac{(p-2)^2 (u-v) \sin q^2}{p^2 u v}$$ $$+\frac{4 u^{-p} \cos q \left((p-2) u v^p \sin^2 q-p u^p v^2 \left(-1+\cos q\right) \left(p \left(-1+\cos q\right)+2 \sin q\right)\right)}{p^2 v^2 \left(-1+(-1+2 v) \cos q\right)}\le 0$$

or what is the same: $$\frac{(p-2)^2 (u-v) v \sin^2 q}{u}$$ $$+\frac{4 u^{-p} \cos q \left((p-2) u v^p \sin^2 q-p u^p v^2 \left(-1+\cos q\right) \left(p \left(-1+\cos q\right)+2 \sin q\right)\right)}{-1+(-1+2 v) \cos q}\le 0.$$

By using the change $u=v/b$, it reduces to $$K(b)=-(-1+b) (p-2)^2 v \sin^2 q$$ $$+\frac{8 v \cos q \sin^2 \frac{q}{2} \left(b^p (p-2) \left(1+\cos q\right)+b p v \left(p \left(-1+\cos q\right)+2 \sin q\right)\right)}{b \left(-1+(-1+2 v) \cos q\right)}\le 0.$$

 Now $$K'(b)= \frac{8 b^{-2+p} (p-2) (-1+p) v \cos q \left(1+\cos q\right) \sin^2 \frac{q}{2}}{-1+(-1+2 v) \cos q}-(p-2)^2 v \sin^2 q\le 0$$ for $v\in[1,1+  \sec q \sin^2 \frac{\pi }{2p}]$  and thus $K$ is decreasing. Since $$K(1)=\frac{8 v \cos q \sin^2 \frac{q}{2} \left((p-2) \left(1+\cos q\right)+p v \left(p \left(-1+\cos q\right)+2 \sin q\right)\right)}{-1+(-1+2 v) \cos q}\le 0$$ for $p\in[2,\infty)$ and $v\in [1,1+\sec q \sin^2 \frac{\pi }{2p})$, the inequality follows.
\\
\textbf{(ii) Proof of the case $u<v$}

We make use the new parameters \( t, s \in [0,1] \), where:
\begin{equation*}
u = \frac{1}{s}, \quad v = \frac{1}{2} (2-t+t \sec q).
\end{equation*}
%Then, we express \(\cosh b\) as:
%\begin{equation*}
%\cosh b = \frac{-u+\cos q \left(-u (-2+v)+u (-1+v) \cos q+\sqrt{(-1+v) \left(1-u^2+\left(1+u^2-2 v\right) %\cos q\right) \left(-1-v+(-1+v) \cos q\right)}\right)}{-1+(-1+2 v) \cos q}.\end{equation*}

Then the coefficients of quadratic expression are

$$a_0=\frac{-6 t+(-4-4 s (-1+t)+6 t) \cos q+\sqrt{2} \sqrt{b_0}}{4 s (-1+t)}$$
where $$b_0=\left(1+s^2 (-1+t)\right) t \left(-4 (-2+t) \cos q+t \left(3+\cos (2q)\right)\right)$$
$$a_1=\frac{2 \left(s (-2+t)+2^{1-\frac{p}{2}} \left(s \left(2-t+t \sec q\right)\right)^{p/2}\right) \sin q-2 s t \tan q}{p s \left(2-t+t \sec q\right)}$$

$$a_2=\frac{ (p-2) s^{-1+p} \left(1+\cos q\right)\tau +p \left(p \left(-1+\cos q\right)+2 \sin q\right)}{p^2}$$
and $$\tau = 2^{2-p}\left(2-t+t \sec q\right)^{-2+p}.$$

Then
$$a_1^2 p^2 s (1-t)=4 s (1-t) \left(-1+2^{1-\frac{p}{2}} \left(s \left(2-t+t \sec q\right)\right)^{\frac{1}{2} (p-2)}\right)^2 \sin^2 q$$
%$$D =-\frac{1}{p^2 s (-1+t)}\left(-6 t+(-4-4 s (-1+t)+6 t) \cos q+\sqrt{2} \sqrt{\left(1+s^2 (-1+t)\right) t \left(-4 (-2+t) \cos q+t \left(3+\cos (2q)\right)\right)}\right) \left(2^{2-p} (p-2) s^{-1+p} \left(1+\cos q\right) \left(2-t+t \sec q\right)^{-2+p}+p \left(p \left(-1+\cos q\right)+2 \sin q\right)\right)+\frac{\left(2 \left(s (-2+t)+2^{1-\frac{p}{2}} \left(s \left(2-t+t \sec q\right)\right)^{p/2}\right) \sin q-2 s t \tan q\right)^2}{p^2 s^2 \left(2-t+t \sec q\right)^2}$$

and $X=p^2(1-t)s (a_1^2-4 a_0a_2)$ is equal to

\begin{equation*}
\begin{split}
&\left(
    -6t
    + (-4 - 4s(-1 + t) + 6t) \cos\left({q}\right) \right. \\
&\left. \quad + \sqrt{2} \sqrt{
        (1 + s^2(-1 + t)) t
        \left(
            -4(-2 + t) \cos\left({q}\right)
            + t \left(3 + \cos\left(\frac{2\pi}{p}\right)\right)
        \right)
    }
\right) \\
&\quad \times \left(
    2^{2 - p} (p-2) s^{-1 + p} \left(1 + \cos\left({q}\right)\right)
    \left(2 - t + t \sec\left({q}\right)\right)^{-2 + p} \right. \\
&\left. \quad + p \left(p(-1 + \cos\left({q}\right)) + 2 \sin\left({q}\right)\right)
\right) \\
&\quad + 4 s (1-t) \left(-1+2^{1-\frac{p}{2}} \left(s \left(2-t+t \sec q\right)\right)^{\frac{1}{2} (p-2)}\right)^2 \sin^2 q.
\end{split}
\end{equation*}

Now we define \[\begin{split}g(s,t)=&
    -6t
    + (-4 - 4s(-1 + t) + 6t) \cos\left({q}\right)  \\
& \quad + \sqrt{2} \sqrt{
        (1 + s^2(-1 + t)) t
        \left(
            -4(-2 + t) \cos\left({q}\right)
            + t \left(3 + \cos\left(\frac{2\pi}{p}\right)\right)
        \right)}.\end{split}\]
  Then in view of the trivial inequality \[\begin{split}&\left(
    2^{2 - p} (p-2) s^{-1 + p} \left(1 + \cos\left({q}\right)\right)
    \left(2 - t + t \sec\left({q}\right)\right)^{-2 + p} \right. \\
& \quad + p \left(p\left(-1 + \cos\left({q}\right)\right) + 2 \sin\left({q}\right)\right)\ge p \left(p\left(-1 + \cos\left({q}\right)\right) + 2 \sin\left({q}\right)\right),\end{split}\] $X\le 0$ if  $$A(s,t):=s \left(-1+2^{1-\frac{p}{2}} \left(s \left(2-t+t \sec q\right)\right)^{\frac{1}{2} (p-2)}\right)^2$$ $$+p g(s,t) \csc q\left(2-p \tan \frac{q}{2}\right)\le 0.$$
Let $$H(s):=s\left(-1+2^{1-\frac{p}{2}} \left(s \left(2-t+t \sec q\right)\right)^{\frac{1}{2} (p-2)}\right)^2.$$ Then $$H(s)= s(T(s)-1)^2,$$  where

$$T(s)  =2^{1-\frac{p}{2}} \left(s \left(2-t+t \sec q\right)\right)^{\frac{1}{2} (p-2)}.$$

Then $$H'(s)= (-1+T(s)) (-1+T(s)+(p-2) T(s)),$$ and the only solutions $s_\circ$ and $\tilde s$ of $H'(s)=0$ satisfy that $T(s_\circ)=1$ and $T(\tilde s) =\frac{1}{p-1}$.
Then the solutions to the following equation  $\partial _s(H(s,t))=0$ are $s_\circ$ and $\tilde s$, where  $$s_\circ=\frac{2 \cos q}{t+(2-t) \cos q},$$ and  $$\tilde s=s_\circ { (p-1)^{\frac{2}{2-p}}}.$$

Then the case $u<v$ is equivalent with $s>s_\circ$. Note that $\tilde s$ is not in this interval $[s_\circ,1]$.

Now $$\partial_s g(s,t)= (t-1) \left(-4 \cos q+X\right),$$ where $$X=\frac{\sqrt{2} s \sqrt{\left(1+s^2 (t-1)\right) t \left(4 (2-t) \cos q+t \left(3+\cos (2q)\right)\right)}}{1+s^2 (t-1)}.$$

And the only zero of $\partial_s g(s,t)=0$ is $$s=s_\circ=\frac{2 \cos q}{t+(2-t) \cos q}.$$
Then $$g(s_\circ,t)= 4 t \left(-1+\cos q\right)\le 0$$

Since $$g(1,t)-g(s_\circ,t)=-2 t-2 t \cos q+\sqrt{2} \sqrt{t^2 \left(-4 (-2+t) \cos q+t \left(3+\cos (2q)\right)\right)},$$ we have $g(1,t)-g(s_\circ,t)\le 0$ which is equivalent with the trivial inequality $$(-1+t) t^2 \sin^2 \frac{q}{2}\le 0$$

and \[\begin{split}g(0,t)-g(s_\circ,t)&=-2 t+2 (-2+t) \cos q\\&+\sqrt{2} \sqrt{t \left(-4 (-2+t) \cos q+t \left(3+\cos (2q)\right)\right)}\le 0\end{split}\]
 which is equivalent with the inequality $(-1+t) \cos^2 q<0$.

Hence  the maximum value of of $g(s,t)$ is $$ g(s_\circ, t)=4 t \left(-1+\cos q\right).$$

Thus $$A(s,t)\le B(s,t)=s \left(-1+2^{1-\frac{p}{2}} \left(s \left(2-t+t \sec q\right)\right)^{\frac{1}{2} (p-2)}\right)^2$$ $$+8 p t \csc q\sin^2 \frac{q}{2} \left(-2+p \tan \frac{q}{2}\right).$$

Since $\partial_s B(s,t)|_{s=s_\circ}=0$, and $B(s_\circ,t)<0$, it remains to show that $B(1,t)\le 0$.

Then for $$g(t)=2^{1-\frac{p}{2}} \left(2-t+t \sec q\right)^{\frac{1}{2} (p-2)}$$ we need to prove that $$(-1+g(t))^2+8 p t \csc q\sin^2 \frac{q}{2} \left(-2+p \tan \frac{q}{2}\right)\le 0.$$

To prove
$$
\left(-1+2^{1-\frac{p}{2}} \left(2-t+t \sec q\right)^{\frac{1}{2} (p-2)}\right)^2+8 p t \csc q\sin^2 \frac{q}{2} \left(-2+p \tan \frac{q}{2}\right)\le 0,$$ let us chose the change  $$\tau=2^{1-\frac{p}{2}} \left(2-t+t \sec q\right)^{\frac{1}{2} (p-2)}.$$  Then we come to the inequality $$H(\tau)=(-1+\tau)^2+8 p \left(\frac{-2+\left(2^{-1+\frac{p}{2}} \tau\right)^{\frac{2}{p-2}}}{-1+\sec q}\right) \csc q\sin^2 \frac{q}{2} \left(-2+p \tan \frac{q}{2}\right)\le 0$$ for $1\le \tau\le 2^{1-\frac{p}{2}} \left(1+\sec q\right)^{\frac{1}{2} (p-2)}$.

We compute the derivative:
\[
H'(\tau) = 2(-1 + \tau)
+ \frac{2^{3 + \frac{p}{2}} p \left(2^{-1 + \frac{p}{2}} \tau\right)^{-1 + \frac{2}{p-2}} \csc q \, \sin^2\left(\frac{q}{2}\right) \left(-2 + p\tan \frac{q}{2}\right)}{(p-2)(-1 + \sec q)}.
\]

The second derivative is:
\[
H''(\tau) = 2
+ \frac{2^{2 + p} \left(-1 + \frac{2}{p-2}\right) p \left(2^{-1 + \frac{p}{2}} \tau\right)^{-2 + \frac{2}{p-2}} \csc q \, \sin^2\left(\frac{q}{2}\right) \left(-2 + p\tan \frac{q}{2}\right)}{(p-2)(-1 + \sec q)}.
\]

Setting \( H''(\tau) = 0 \), we solve for the critical point \( \tau = \tau' \):
\[
\tau' = 2^{1 - \frac{p}{2}} \left( \frac{2^{-1 - p} (p-2)^2 \csc^2\left(\frac{\pi}{2p}\right) (-1 + \sec q) \sin q}{(-4 + p)p\left(-2 + p\tan \frac{q}{2}\right)} \right)^{\frac{p-2 }{6 - 2p}}.
\]

Evaluating the derivative at \( \tau' \), we obtain:
\[
H'(\tau') = -2
+ \frac{2^{3 - \frac{p}{2}} (-3 + p) \left( \frac{2^{-p} (p-2)^2 \tan q}{(-4 + p)p \left(-2 + p\tan \frac{q}{2}\right)} \right)^{\frac{-2 + p}{2(3 - p)}}}{-4 + p},
\]
which is negative. For \( p > 3 \), this is immediate. For \( 2 \le p \le 3 \), we use the inequalities:
\[
\frac{2^{-p} (p-2)^2 \tan q}{(-4 + p)p \left(-2 + p\tan \frac{q}{2}\right)} \le 1 \le \frac{2^{-2 + \frac{p}{2}} (-4 + p)}{-3 + p}.
\]

Furthermore, note that
\[
H'(1) = \frac{32p \csc q \, \sin^2\left(\frac{q}{2}\right) \left(-2 + p\tan \frac{q}{2}\right)}{(p-2)(-1 + \sec q)} \le 0.
\]

We also compute:
\begin{equation*}\begin{split}
&H'\left(2^{1 - \frac{p}{2}} \left(1 + \sec q\right)^{\frac{1}{2}(p-2)}\right)
\\&= -2
+ 2^{2 - \frac{p}{2}} \left(1 + \sec q\right)^{-1 + \frac{p}{2}}
\\&+ \frac{2^{3 + \frac{p}{2}} p \csc q \left(1 + \sec q\right)^{2 - \frac{p}{2}} \sin^2\left(\frac{q}{2}\right) \left(-2 + p\tan \frac{q}{2}\right)}{(p-2)(-1 + \sec q)}
\end{split}
\end{equation*}
and prove \begin{equation}\label{finain}
H'\left(2^{1 - \frac{p}{2}} \left(1 + \sec q\right)^{\frac{1}{2}(p-2)}\right)\le 0.\end{equation}

Having proved \eqref{finain}, we will conclude that \( H \) is decreasing. In particular, we have
\[
H(\tau) \le H(1) = 0.
\]
To prove the inequality \eqref{finain}, we begin with the estimate:
\[
(p-2)\left(-1 + \sec q\right) \le \frac{4}{\pi}.
\]
Next, prove that
\[
2 - 2^{2 - \frac{p}{2}} \left(1 + \sec q\right)^{-1 + \frac{p}{2}}
\ge
-\frac{1}{2} \cdot 2^{2 - \frac{p}{2}} \left(1 + \sec q\right)^{-1 + \frac{p}{2}}.
\]
The last inequality can be rewritten as $$\cos q >\frac{1}{-1+2^{\frac{p}{p-2}}}.$$ This inequality follows from $$\cos q\ge \frac{p-2}{p}$$ and $(1+1)^x\ge 1+x$ for $x=\frac{p}{p-2}$. Recall that $q=p/\pi$.

Thus, inequality \eqref{finain} will be implied by:
\[
- \left(1 + \sec q\right)^{-1 + \frac{p}{2}}
+ 2^p p \pi \csc q \left(1 + \sec q\right)^{2 - \frac{p}{2}} \sin^2\left(\frac{q}{2}\right)
\left(2 - p\tan \frac{q}{2}\right) \ge 0.
\]
Further simplifying, we get:
\[
-1 + 2^{1 + p} p \pi \csc q \left(1 + \sec q\right)^{3 - p} \sin^2\left(\frac{q}{2}\right)
\left(2 - p\tan \frac{q}{2}\right) \ge 0,
\]
and finally:
\[
-1 + 2^p p \pi \left(1 + \sec q\right)^{3 - p}\tan \frac{q}{2}
\left(2 - p\tan \frac{q}{2}\right) \ge 0.
\]
Since
\[
(1 + \sec q)(p - 2) \ge \frac{4}{\pi},
\]
it suffices to prove that
\[
-1 + 2^{6 - p} (p - 2)^{p - 3} p \pi^{p - 2}\tan \frac{q}{2}
\left(2 - p\tan \frac{q}{2}\right) \ge 0.
\]
Now, we use the inequality \(x^x \ge e^{-1/e} \approx 0.69\) and
\[
L(p): = \frac{p^2 \tan\frac{\pi }{2 p} \left(2-p \tan\frac{\pi }{2 p}\right)}{-2+p}
\ge L(\infty) = \frac{1}{4} (4-\pi ) \pi.
\]

The last inequality follows from the double inequality $$ \frac{\pi }{2 p}\le \tan\frac{\pi }{2 p}\le \frac{4-\pi }{p^2}+\frac{\pi }{2 p}.$$
Thus our inequality reduces to  the inequality:
\[
2^{4-p} e^{-1/e} (4-\pi ) \pi ^{-1+p}>8 e^{-1/e} \left(1+(-1+p) \left(-1+\frac{\pi }{2}\right)\right) (4-\pi )>p,
\]
which holds for every $p\ge 2$.

\end{proof}

\subsection{Proof of the case $p\ge 4$}

We have already established the inequality~\eqref{bein}, and now aim to prove it with the function \( h(t) \) in place of the function \( -\cos\left(\frac{p}{2}(\pi - t)\right) \). Observe that \( h(t) \) coincides with \( -\cos\left(\frac{p}{2}(\pi - t)\right) \) on the interval \( t \in \left[\pi - \frac{2\pi}{p}, \pi\right] \). Therefore, the inequality holds on this interval. Moreover, since
\[
h\left(\left[\pi - \frac{2\pi}{p}, \pi\right]\right) = [-1, 1],
\]
for any \( t \in \left[0, \pi - \frac{2\pi}{p}\right] \), there exists some \( t' \in \left[\pi - \frac{2\pi}{p}, \pi\right] \) such that
\[
h(t) = h(t') = -\cos\left(\frac{p}{2}(\pi - t')\right).
\]
Since the cosine function is decreasing on the interval \( [0, \pi] \), we have
\[
\left(\frac{1}{2r} + \frac{r}{2} + \cos t\right)^{p/2} \geq \left(\frac{1}{2r} + \frac{r}{2} + \cos t'\right)^{p/2}.
\]
This establishes the inequality for \( t \in \left[0, \pi - \frac{2\pi}{p}\right] \), completing the proof.
\subsection*{Acknowledgments} This research was supported by the Ministry of Education, Science and Innovation of Montenegro through the grant "Mathematical Analysis, Optimization and Machine Learning."

\end{document}